\newcommand{\lmin}{\lambda_{\min}}
\newcommand{\opt}{\mathrm{opt}}
\newcommand{\mathC}{\mathbb{C}}
\newcommand{\mathR}{\mathbb{R}}
\newcommand{\trace}{\mathrm{tr}}
\newcommand{\Ex}{\mathbb{E}}
\newcommand{\af}[1]{#1}
\newcommand{\rGS}{\mathrm{rGS}}
\newcommand{\GSW}{\mathrm{GSW}}
\renewcommand{\r}{\mathrm{ra}}
\newcommand{\K}{\mathrm{K}}
\newcommand{\g}{\mathrm{gr}}
\theoremstyle{thmstyleone}%
\newtheorem{theorem}{Theorem}
\newtheorem{lemma}[theorem]{Lemma}%
\newtheorem{corollary}[theorem]{Corollary}%
\newtheorem{definition}[theorem]{Definition}%
\begin{document}

\title[Randomized and Greedy Relaxations for Linear Systems]{On the Convergence of Randomized 
and Greedy Relaxation Schemes for Solving Nonsingular Linear Systems of Equations}

\author[1]{\fnm{Andreas} \sur{Frommer}}\email{frommer@uni-wuppertal.de}
\author[2]{\fnm{Daniel B.} \sur{Szyld}}\email{szyld@temple.edu}
\affil[1]{\orgdiv{Department of Mathematics}, \orgname{Bergische Universit\"at Wuppertal}, \postcode{42097}, \orgaddress{\city{Wuppertal}, \country{Germany}}}
\affil[2]{\orgdiv{Department of Mathematics}, \orgname{Temple University}, \orgaddress{\city{Philadelphia}, \state{PA}, \postcode{19122-6094}, \country{USA}}}

\abstract{We extend results known for the randomized Gauss-Seidel and the Gauss-Southwell methods for the case of a Hermitian and positive definite matrix to certain classes of non-Hermitian matrices. We obtain convergence results for a whole range of parameters describing the probabilities in the randomized method or the greedy choice strategy in the 
Gauss-Southwell-type methods. We identify those choices which make our convergence bounds best possible. Our main tool is to use weighted $\ell_1$-norms to 
measure 
the residuals. A major result is that the best convergence bounds that we obtain for the expected values in the randomized algorithm are as good as the best for the deterministic, but more costly algorithms of Gauss-Southwell type.   
\af{
Numerical experiments illustrate the convergence of the method and the bounds obtained. Comparisons with the
randomized Kaczmarz method are also presented.
}
}

\keywords{
Randomized Gauss-Seidel. Convergence bounds. Greedy algorithms. Gauss-Southwell algorithm. Randomized smoother}

\pacs[MSC Classification]{15A06, 65F15}

\maketitle

\section{Introduction}
Classical stationary iterations such as Jacobi and Gauss-Seidel 
(see, e.g., \cite{BermanPlemmons,saad-book,varga-62}) \af{to solve a square linear system
\begin{equation} \label{linear_system:eq}
A x = b, \mbox{ where } A \in \mathC^{n\times n}, \enspace x, b \in \mathC^{n},
\end{equation}
}%
nowadays
are found to be useful in many situations,
such as smoothers for multigrid methods (see, e.g.,\cite{Ruede.book,Trottenberg}), in high performance computing 
(see, e.g.,~\cite{Wolfson-Pou.Chow.17}) and in particular as scafolding for
methods for discretized PDEs based on domain decomposition;
see, e.g.,~\cite{Glusa.etal.20,Magoules.Szyld.Venet.17,Smith:1996:DPM}.

\af{In recent years, randomized algorithms have gained a lot of attention in numerical computation; see, e.g., the surveys
\cite{Kannan.Vempala.17} and \cite{Martinsson.Tropp.20}. Many different randomized methods and 
algorithms have in particular been suggested and analyzed for the solution of consistent and non-consistent, 
square and non-square linear systems; see, e.g., \cite{Gower.Richtark.15}. 
These methods are 
attractive in situations which typically arise in an HPC or a data science context when matrix products are considerably expensive or when the matrix is so 
large that it does not fit in main memory; see, e.g., the 
discussion in the recent paper \cite{Goweretal2021}.
}

\af{For linear systems, the emphasis has been so far on randomized coordinate descent type algorithms. These methods aim at finding the minimizer of a convex functional $f: \mathC \to \mathC$, the minimizer being the solution of the linear system. The methods perform a sequence of relaxations, where in each relaxation a coordinate $i$ is chosen at random and the current iterate $x$ is modified to become $x + te_i, e_i$ the $i$-th unit vector and $t$ such that $f(x+te_i)$ is minimal. Many convergence results on randomized coordinate descent methods are known, see \cite{Gower.Richtark.15, RichtarikKavac2020}, e.g., and the typical assumption is that $f$ is at least differentiable.  
}

\af{
If $A \in \mathC^{n \times n}$ in \eqref{linear_system:eq} is 
Hermitian and positive definite (hpd), and we take $f(x) = x^* Ax - 
2x^*b $, minimizing in coordinate $i$ is equivalent to solving 
equation $i$ of~\eqref{linear_system:eq} with respect to $x_i$. 
The resulting randomized coordinate descent method is thus a 
randomized version of the Gauss-Seidel method, and an analysis 
of this randomized method was given in 
\cite{Leventhal.Lewis.10} and also, in the more general context 
of randomized Schwarz methods in Hilbert spaces, in 
\cite{GriOsw2012}. See also \cite{Avron.JACM.15}.
}

\af{
If $A$ in \eqref{linear_system:eq} is not hpd, the typical approach is to obtain a randomized algorithm is to consider randomized 
Kaczmarz methods, i.e., coordinate descent for one of the normal equations $A^*Ax = A^*b$ or $AA^*y = b$ with corresponding 
convex and differentiable functionals $f_1(x) = x^*A^* A x - 
2x^*A^*b$ and $f_2(y) = y^*AA^*y - 2y^*b$, respectively. This 
approach can also be pursued when $A$ is non-square, and the 
system may be consistent or non-consistent. Note that the 
original Kaczmarz method \cite{Kaczmarz1937} corresponds to 
coordinate descent for $f_2$ with an integrated back-transformation 
from the iterate $y$ to $x=A^*y$. We emphasize 
that while coordinate descent for $f_1$ is often also termed 
``Gauss-Seidel'', it is different from classical Gauss-Seidel 
directly applied to \eqref{linear_system:eq}, which is what we 
focus on in this paper. There is a 
tremendous amount of literature dealing with randomized 
Kaczmarz type algorithms which we cannot cite exhaustively 
here. Recent publications include 
\cite{
BaiWu18,
BaiWu18b,
BaiWu21,
BaiWangWu21,
BaiWangMuratova22,
K_Du_NLAA19,
Goweretal2021,
GuanGuoXingQiao20,
GuoLi18,
HaddockMa21, Ma.Needell.Ramdas.SIMAX15,
Steinberger21,Steinberger21b,WangLiBaoLiu22,Yang21}.
}

\af{Now, when $A$ is square and non-singular, considering a 
randomized version of the Gauss-Seidel method applied to the 
linear system \eqref{linear_system:eq} directly, is an 
attractive alternative to the randomized Kaczmarz type 
approaches, since under appropriate conditions on $A$ it 
converges more rapidly and requires less work per iteration. This 
is known to be the case when $A$ is hpd; see the papers \cite{Leventhal.Lewis.10} and 
\cite{GriOsw2012} mentioned earlier. The main new contribution 
of  the present work is a convergence analysis for randomized 
Gauss-Seidel also for the case when $A$ is generalized 
diagonally dominant. An important methodological aspect of our 
work is that for $A$  generalized diagonally dominant we do not 
directly relate Gauss-Seidel for \eqref{linear_system:eq} to an 
equivalent coordinate descent method for an appropriate convex 
functional. Our technique of proof will, nevertheless, rely on 
showing that the iterates $x$ reduce---but, as opposed to 
gradient descent, do not necessarily minimize---a weighted 
$\ell_1$-norm of the residual $b-Ax$. Note that 
$\ell_1$-norms are not differentiable and that weighted $\ell_1$ (and 
$\ell_\infty$) -norms) arise canonically in the context of 
generalized diagonal dominance; see, e.g., \cite{
BermanPlemmons, 
varga-62}. 
As a `by-product'' of our analysis, we will also obtain convergence results of greedy choice algorithms of Gauss-Southwell type, see below.
}


\af{In this paper, we consider general methods} based on a splitting $A = M-N$ so that with $H = M^{-1}N$ and $c=M^{-1}b$ one
obtains the (affine) fixed point iteration
\begin{equation} \label{generic_linear:eq}
\mbox{choose } x^0, \enspace x^{m+1} = Hx^{m} + c, \enspace {m}=0,1,\ldots
\end{equation}
as an iterative solution method for $Ax=b$.  In particular, the 
solution $x^*$ of~\eqref{linear_system:eq} is a fixed point of~\eqref{generic_linear:eq}.
\af{
Usually, the matrix $H$ is never formed. Instead, a linear system with the coefficient matrix $M$ is solved at
each iteration $m$.} 
\af{In this general splitting framework, the Gauss-Seidel method is characterized by $M$ being} the lower triangular part of $A$, and the method is equivalent to
relaxing one row at a time in the natural order $1,2,\ldots,n$. In other terms, 
if we write 
the classical Jacobi splitting
$A = D -B$, $D$ being the diagonal of $A$, then, with $H = D^{-1}B = I - D^{-1}A$,
we have the following rendition of the Gauss-Seidel algorithm, using a ``global'' index $k$ for each single relaxation.

\begin{algorithm}[H]
\begin{algorithmic}
\For{$k=1,2,\ldots$ until a convergence criterion is satisfied}
 \State $i = k - n \lfloor (k-1)/n \rfloor $   \Comment{$i \in \{1,\ldots,n$\} with $i \equiv k \bmod n$} 
 \State $x_i^{k+1} = \sum_{j=1}^n h_{ij}x_j^k + c_i, \quad x_\ell^{k+1} = x_\ell^k$  for $\ell \neq i$
\EndFor
\end{algorithmic}
\caption{Sequential relaxation for \eqref{generic_linear:eq} (``Gauss-Seidel'' if $H=D^{-1}B$)  \label{sequential:alg}}
\end{algorithm}

Each update from $k$ to $k+1$ is termed one {\em relaxation}, and $n$ such relaxations, {since they are done one after the other},
correspond to one iteration ${m}$ in~\eqref{generic_linear:eq} with $H = (D-L)^{-1}U$, where $-L$ and $-U$ denote the lower and upper triangular part of $A$, respectively ({and} $B = L+U)$. So,
$n$ successive relaxations of a ``Jacobi-type", i.e.\ with $H=I-D^{-1}B$, performed in the natural order $1,2,\ldots,n$, is identical to
one iteration of Gauss-Seidel. 

Gauss in fact proposed another method, later popularized by Southwell, 
and known either as Southwell method or as Gauss-Southwell method; 
see, e.g., the historic review 
of these developments in 
\cite{Saad.history.20}.
We describe this method {in Algorithm~\ref{greedy_linear:alg}} for a general splitting based method where $H = M^{-1}N$ with $A = M-N$. The Gauss-Southwell method arises for $H = D^{-1}B$. 
The original Gauss-Southwell method selects the component $i$ to be relaxed as the one at which the residual $r^k = b-Ax^k$ is largest, i.e., it takes $i$ for which  
\begin{equation} \label{standard_pick:eq}
 |r_i^k| = \max_{j=1}^n |r_j^k|. 
\end{equation}
As a consequence, we are not updating components in a prescribed order, but rather 
choose the row to relax to be the one for which the current residual has its largest component. 
This can be considered a greedy pick strategy, and we formulate Algorithm~\ref{greedy_linear:alg} in a manner to allow for general greedy pick rules.

\begin{algorithm}[H]
\begin{algorithmic}
\State fix a greedy pick rule, for example \eqref{standard_pick:eq}
\For{$k=1,2,\ldots$}
 \State determine an index $i$ according to the greedy pick rule
 \State $x_i^{k+1} = \sum_{j=1}^n h_{ij}x_j^k + c_i, \quad x_\ell^{k+1} = x_\ell^k$  for $\ell \neq i$
\EndFor
\end{algorithmic}
\caption{Greedy relaxation for \eqref{generic_linear:eq} (``Gauss-Southwell'' if $H = D^{-1}B$) \label{greedy_linear:alg}}
\end{algorithm}

A generalization of the greedy pick rule \eqref{standard_pick:eq} is to fix weights $\beta_i > 0$ and choose~$i$ such that
\begin{equation} \label{weighted_pick:eq}
\beta_i |r_i^k| = \max_{j=1}^n \beta_j |r_j^k|,
\end{equation}
{\color{blue}
cf.\ \cite{GriOsw2012},
}
and we will see later that for appropriate choices of the $\beta_i$ {we can prove better convergence bounds than for the standard greedy pick rule \eqref{standard_pick:eq}.}

Both greedy pick rules \eqref{standard_pick:eq}
and \eqref{weighted_pick:eq}
require 
an update of the residual after each relaxation, which represents extra work. 
Moreover, additional work is required for computing the maximum. 
Using the ``preconditioned'' residual 
$\hat{r}^k = M^{-1}r^k = c - (I-H)x^k$ can to some extent reduce this overhead: Once $\hat r^k$ is computed, the next relaxation $x_i^{k+1} = \sum_{j=1}^n h_{ij}x_j^k + c_i$ can be obtained easily as
\[
x_i^{k+1} = x_i^k + \hat{r}_i^k.
\]
We might therefore want to use a greedy pick rule based on the preconditioned residuals, which using fixed weights $\beta_i$ as before, can be formulated as
\begin{equation} \label{prec_pick:eq}
\beta_i |\hat r^k_i| = \max_{j=1}^n \beta_j |\hat r_j^k|.
\end{equation}

It has been demonstrated that Gauss-Southwell can indeed converge in fewer relaxations than Gauss-Seidel, but
the total computational time is often higher, due to the computation of the maximum,
and, in a parallel setting, the added cost of communication \cite{Wolfson-Pou.Chow.17}.

In this paper, we discuss the greedy relaxation scheme of Algorithm~\ref{greedy_linear:alg} as well as a randomized version of Algorithm~\ref{sequential:alg}, which
for $H=D^{-1}B$ is usually called randomized Gauss-Seidel. 
We give bounds on the expected value of the residual norm which match analogous convergence bounds for the greedy algorithm.

The randomized iteration derived from \eqref{generic_linear:eq} fixes probabilities $p_i \in (0,1)$, 
\linebreak
$i=1,\ldots,n$, with $\sum_{i=1}^n p_i = 1$ and proceeds as follows.  
\begin{algorithm}[H]
\begin{algorithmic}
\For{$k=1,2,\ldots$}
 \State choose index $i$ with probability $p_i$ 
 \State $x_i^{k+1} = \sum_{j=1}^n h_{ij}x_j^k + c_i, \quad x_\ell^{k+1} = x_\ell^k$  for $\ell \neq i$
\EndFor
\end{algorithmic}
\caption{Randomized iteration for \eqref{generic_linear:eq} (``randomized Gauss-Seidel'' if $H=D^{-1}B$) \label{randomized_linear:alg}}
\end{algorithm}

In the randomized Algorithm~\ref{randomized_linear:alg}, the order of the relaxation does not follow a prescribed order, as in Gauss-Seidel,
nor a greedy order depending on the entries in the current residual vector, as in Gauss-Southwell,
but instead, each row $i$ to relax is chosen at random with a fixed positive probability $p_i$.

The paper is organized as follows.
\af{We first repeat the convergence results from \cite{GriOsw2012,Leventhal.Lewis.10}
for matrices which are hpd
in Section~\ref{hpd:sec},} and then, in the rest of the paper,  present new results for 
non-Hermitian matrices.
As a byproduct of our investigation, we also show that some greedy choices other
than~\eqref{standard_pick:eq}
produce methods {for which we obtain better bounds on the rate of convergence.}

\af{Our results are theoretical in nature and illustrated with
numerical experiments. We are aware that other methods may be more efficient than those discussed here. But we believe that our results represent an interesting contribution to randomized and greedy relaxation algorithms since they show that we can deviate from the slowly converging Kaczmarz type approaches not only when $A$ is hpd but also when $A$ is (generalized) diagonally dominant. We thus trust that on one hand, the results are interesting in and on themselves, and on the other they may form the basis
for the analysis of other practical methods.  
}
Asynchronous iterative methods \cite{Frommer.Szyld.00}, \af{for example}, can be {interpreted in terms of} 
randomized iterations; see, e.g., 
\cite{Avron.JACM.15, strik:02}. 
\af{We expect} that the theoretical tools developed {here can also serve as a foundation}
for the analysis of such asynchronous methods,
as well as for randomized block methods and randomized Schwarz methods for nonsingular
linear systems; cf.~\cite{GriOsw2012}.

For future use we recall that $r^k = A(x^*-x^k)$ where $x^*$ is the solution of the linear system \eqref{linear_system:eq}
and $x^*-x^k$ is the error at the relaxation (or iteration) $k$.

\section{The Hermitian and positive definite case} \label{hpd:sec}
Consider the particular case that $H$ in 
Algorithms~\ref{greedy_linear:alg} and~\ref{randomized_linear:alg} 
arises from the (relaxed) Jacobi splitting
\begin{equation} \label{JOR_splitting:eq}
A = \tfrac{1}{\omega}D-\left( (1-\tfrac{1}{\omega})D +B \right) \mbox{ with $D$ the diagonal part of $A = D-B$,}
\end{equation}
where $\omega \in \mathR$ is a relaxation parameter, i.e.
\begin{equation} \label{JOR_matrix:eq}
H = H_\omega = (1-\omega)I + \omega D^{-1} B.
\end{equation}
Then the fixed point iteration \eqref{generic_linear:eq} is just the relaxed Jacobi iteration, which reduces to standard 
Jacobi if $\omega = 1$, {and the associated randomized iteration from Algorithm~\ref{randomized_linear:alg} is the randomized relaxed Gauss-Seidel method whereas the associated greedy Algorithm~\ref{greedy_linear:alg} is known as the relaxed Gauss-Southwell method if we take the greedy pick 
rule \eqref{standard_pick:eq}.}  
Using the residual $r^k$, the update in the third lines of 
Algorithms~\ref{greedy_linear:alg} and~\ref{randomized_linear:alg} for $H_\omega$
can alternatively be formulated as
\begin{equation} \label{JOR_residual_formulation:eq}
x^{k+1} = x^k + \omega \cdot \frac{r^k_i}{a_{ii}} e_i,
\end{equation}
where $e_i$ denotes the $i$th canonical unit vector in $\mathC^n$.

Now assume that $A$ is hpd. The Jacobi iteration then does not converge unconditionally, a sufficient condition for convergenc being that with $A = D-B$ the matrix $D+B$ is hpd 
as well. The relaxed Gauss-Seidel iteration, on the other hand, is unconditionally convergent provided $\omega \in (0,2)$; see, e.g., \cite{varga-62}. 

For the randomized Gauss-Seidel and the Gauss-Southwell iterations the following results are essentially known. 

In fact, most of Theorem~\ref{GS_hpd:thm} is a special case of what was shown in \af{\cite{Leventhal.Lewis.10}} and \cite{GriOsw2012} in the fairly more general context 
of (relaxed) randomized multiplicative Schwarz methods. 
For the sake of completeness, \af{and to set the stage for our new results}, we repeat the essentials of the proofs in \cite{Leventhal.Lewis.10} and \cite{GriOsw2012} here. 

We use the $A$-inner product and the $A$-energy norm which, for $A$ hpd, are defined as 
\[
\langle x,y \rangle_A = \langle Ax,y \rangle, \enspace \|x\|_A = \sqrt{\langle x,x\rangle}_A,
\]
with $\langle \cdot, \cdot \rangle$ the standard inner product on $\mathC^n$.
Before we state the main theorem, we formulate the following useful 
result relating the harmonic and the arithmetic means of a sequence and the extrema of the product sequence. 

\begin{lemma} \label{convex_comb:lem}Let $a_i, \gamma_i \in \mathR, i=1,\ldots, n$  with $a_i > 0, \gamma_i \geq 0 $, $i=1,\ldots,n$. Then
\[
\min_{i=1}^n \gamma_i a_i \leq \alpha\gamma    \leq \max_{i=1}^n \gamma_i a_i, 
\]
where
\[
\alpha  = n/\sum_{j=1}^n a_j^{-1}, \enspace  \gamma = \frac{1}{n} \sum_{i=1}^n \gamma_i
\]
are the harmonic mean and the arithmetic mean, respectively. 
\end{lemma}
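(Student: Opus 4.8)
The plan is to recognize the product $\alpha\gamma$ as a convex combination of the numbers $\gamma_i a_i$, after which both inequalities reduce to the elementary fact that a weighted average with nonnegative weights summing to one lies between the smallest and the largest of the averaged quantities.

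First I would multiply out the two means. Since the factors of $n$ cancel, one obtains
\[
\alpha\gamma = \frac{n}{\sum_{j=1}^n a_j^{-1}}\cdot\frac{1}{n}\sum_{i=1}^n\gamma_i = \frac{\sum_{i=1}^n\gamma_i}{\sum_{j=1}^n a_j^{-1}},
\]
so only the harmonic mean of the $a_i$ and the arithmetic mean of the $\gamma_i$ survive in this particular product.

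The key step is to introduce the positive weights $w_i := a_i^{-1}>0$ and to observe that $\gamma_i = (\gamma_i a_i)\,a_i^{-1} = (\gamma_i a_i)\,w_i$. Substituting this into the numerator rewrites the ratio as
\[
\alpha\gamma = \frac{\sum_{i=1}^n (\gamma_i a_i)\,w_i}{\sum_{j=1}^n w_j} = \sum_{i=1}^n \lambda_i\,(\gamma_i a_i), \qquad \lambda_i := \frac{w_i}{\sum_{j=1}^n w_j}.
\]
Because each $w_i>0$, the coefficients $\lambda_i$ are positive and satisfy $\sum_{i=1}^n\lambda_i=1$; hence $\alpha\gamma$ is a genuine convex combination of the values $\gamma_i a_i$.

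Finally I would invoke the standard bound for convex combinations: any weighted average of real numbers with nonnegative weights summing to one is at least the minimum and at most the maximum of those numbers. Applied to the $\gamma_i a_i$, this immediately yields $\min_i \gamma_i a_i \le \alpha\gamma \le \max_i \gamma_i a_i$, which is the claim. I do not anticipate a real obstacle here; the only thing to spot is the rewriting $\gamma_i = (\gamma_i a_i)w_i$ that converts the ratio of sums into a convex combination. Note that the hypothesis $a_i>0$ is exactly what makes the weights well defined and positive, whereas the nonnegativity of the $\gamma_i$ is not actually needed for the convex-combination argument itself (it only guarantees $\gamma_i a_i\ge 0$, and is presumably imposed for the intended application).
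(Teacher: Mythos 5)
Your proof is correct, but it takes a cleaner and slightly more general route than the paper's. You rewrite $\alpha\gamma$ \emph{exactly} as a convex combination of the products $\gamma_i a_i$ with weights $\lambda_i = a_i^{-1}/\sum_j a_j^{-1}$, after which both inequalities drop out simultaneously from the elementary bound on weighted averages. The paper instead works with two different coefficient vectors for convex combinations of the $a_i$ themselves: $\hat\gamma_i = (\alpha/n)/a_i$, for which $a_i\hat\gamma_i$ is constant, and $\tilde\gamma_i = \gamma_i/(n\gamma)$, and then argues by a pigeonhole/contradiction step (both sets of coefficients sum to $1$, so $\tilde\gamma_{j_0}\ge\hat\gamma_{j_0}$ for some $j_0$) to get the max inequality, with the min inequality handled "similarly." Your identity makes the mechanism more transparent and avoids the case split; it also buys a little extra generality, as you note: the paper's coefficients $\tilde\gamma_i=\gamma_i/(n\gamma)$ require $\gamma_i\ge 0$ (and implicitly $\gamma>0$, so the degenerate case $\gamma=0$ has to be treated separately), whereas your argument needs neither. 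Both proofs rest on the same underlying convexity fact, so the difference is one of packaging rather than substance, but yours is the tighter packaging.
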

\begin{proof} Take the special convex combination of the $a_i$ with coefficients $\hat{\gamma}_i =  \frac{\alpha/n}{a_i}$. Then
$a_i \hat{\gamma_i} =  \alpha/n$, and for the convex combination of the $a_i$ with coefficients 
$\tilde{\gamma}_i = \frac{\gamma_i}{n\gamma}$  there is at least one index, say $j_0$, for which
$\tilde{\gamma}_{j_0} \geq  \hat{\gamma}_{j_0}$, since otherwise 
$\tilde{\gamma}_i < \hat{\gamma}_i$ for all~$i$ and thus $\sum_{i=1}^n \tilde{\gamma}_i < \sum_{i=1}^n\hat{\gamma}_i = 1$. This proves  $\max_{i=1}^n \gamma_i a_i \geq \alpha/n \cdot n\gamma = \alpha\gamma$. The inequality for the minimum follows in a similar manner.
\end{proof}

\begin{theorem} \label{GS_hpd:thm}  Let $A$ be hpd and denote $\lmin > 0$ its smallest eigenvalue.
\begin{itemize}
\item[(i)]  In randomized relaxed Gauss-Seidel (Algorithm~\ref{randomized_linear:alg} with $H = H_\omega = (1-\omega)I + \omega D^{-1} B$) the expected values for the squares of the norms of the errors $e^k = x^k-x^*$ satisfy
\begin{equation} \label{expected_val_hpd:eq}
\mathbb{E}\left(\|x^k-x^*\|_A^2\right) \leq (1- \alpha^\rGS)^k \| x^0-x^*\|_A^2 
\end{equation}
with
\begin{equation*}
 \alpha^\rGS = \omega(2-\omega)\lambda_{\min} \min_{i=1}^n \frac{p_i}{a_{ii}} \cdot
\end{equation*}
Herein, $1-\alpha^\rGS$ becomes smallest 
if we take 
$p_i = {a_{ii}}/{\trace(A)}$ 
for all $i$, in which case \eqref{expected_val_hpd:eq} holds with 
\begin{equation} \label{alpha_opt:eq}
\alpha^\rGS =  \alpha_{\opt} = \omega(2-\omega)\frac{\lambda_{\min}}{\trace(A)} ~\cdot
\end{equation}
\item[(ii)] In relaxed Gauss-Southwell (Algorithm~\ref{greedy_linear:alg} with $H = H_\omega = (1-\omega)I + \omega D^{-1} B$) and the greedy pick \eqref{weighted_pick:eq} we have 
\begin{equation} \label{bound_greedy_hpd:eq}
\|x^{k}-x^*\|_A^2 \leq  
(1- \alpha^\GSW )^k \|x^{0}-x^*\|_A^2 
\end{equation}
with
\begin{equation*}\alpha^\GSW = \omega(2-\omega)\lambda_{\min} \min_{i=1}^n \frac{\pi_i}{a_{ii}}, \enspace \pi_i = \frac{1/\beta_i^2} {\sum_{j=1}^n 1/\beta_j^2} 
~\cdot
\end{equation*}
Herein, $1-\alpha^\GSW$ becomes smallest if we take  
$\beta_i = {1}/{\sqrt{a_{ii}}}$ 
for all $i$ in the greedy pick rule, i.e., we choose $i$ such that
\begin{equation} \label{hpd_greedy_opt:eq}
\frac{|r_i^k|^2}{a_{ii}} = \max_{j=1}^n \frac{|r_j^k|^2}{a_{jj}},
\end{equation}
in which case \eqref{bound_greedy_hpd:eq} holds with 
$\alpha^{\GSW}$ the same optimal value as in (i), i.e.\ $\alpha^\GSW =  \alpha_{\opt}$ from \eqref{alpha_opt:eq}. 
\end{itemize}
\end{theorem}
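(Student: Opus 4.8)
The plan is to reduce both parts to a single exact energy identity for one relaxation and then to bound the per-step decrease from below. First I would fix the index $i$ selected in a given step and, using the residual form of the update \eqref{JOR_residual_formulation:eq} together with $r^k = A(x^*-x^k) = -Ae^k$, expand $\|e^{k+1}\|_A^2 = \|e^k + \omega \tfrac{r_i^k}{a_{ii}} e_i\|_A^2$. Since $\langle Ae^k, e_i\rangle = (Ae^k)_i = -r_i^k$ and $\langle Ae_i, e_i\rangle = a_{ii}$, the cross term contributes $-2\omega |r_i^k|^2/a_{ii}$ (it is real because $A$ is Hermitian) and the quadratic term $+\omega^2 |r_i^k|^2/a_{ii}$, giving the key identity
\[
\|e^{k+1}\|_A^2 = \|e^k\|_A^2 - \omega(2-\omega)\,\frac{|r_i^k|^2}{a_{ii}}.
\]
This is the engine of both parts; everything else is bounding $\omega(2-\omega)|r_i^k|^2/a_{ii}$ below by a fixed fraction of $\|e^k\|_A^2$. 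For that I will repeatedly use the spectral estimate $\|r^k\|_2^2 = \|Ae^k\|_2^2 \geq \lambda_{\min}\|e^k\|_A^2$, which follows by writing $e^k$ in an orthonormal eigenbasis of $A$, so that $\|Ae^k\|_2^2 = \sum_j \lambda_j^2|c_j|^2 \geq \lambda_{\min}\sum_j \lambda_j|c_j|^2 = \lambda_{\min}\|e^k\|_A^2$.

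For part (i) I take the conditional expectation of the identity over $i$ drawn with probability $p_i$, obtaining $\mathbb{E}(\|e^{k+1}\|_A^2 \mid x^k) = \|e^k\|_A^2 - \omega(2-\omega)\sum_i p_i |r_i^k|^2/a_{ii}$. Estimating $\sum_i p_i |r_i^k|^2/a_{ii} \geq (\min_i p_i/a_{ii})\|r^k\|_2^2 \geq (\min_i p_i/a_{ii})\lambda_{\min}\|e^k\|_A^2$ and taking total expectations yields the one-step contraction factor $1-\alpha^{\rGS}$; iterating gives \eqref{expected_val_hpd:eq}. For part (ii) the weighted greedy rule \eqref{weighted_pick:eq} makes the selected $i$ satisfy $\beta_i^2|r_i^k|^2 \geq \beta_j^2|r_j^k|^2$ for all $j$; summing $|r_j^k|^2 \leq (\beta_i^2/\beta_j^2)|r_i^k|^2$ over $j$ gives $|r_i^k|^2 \geq \pi_i\|r^k\|_2^2$, whence $|r_i^k|^2/a_{ii} \geq (\min_j \pi_j/a_{jj})\|r^k\|_2^2$, and the same spectral estimate produces the deterministic factor $1-\alpha^{\GSW}$.

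Finally, for the optimal parameters I would invoke Lemma~\ref{convex_comb:lem}. Both $\alpha^{\rGS}$ and $\alpha^{\GSW}$ have the form $\omega(2-\omega)\lambda_{\min}\min_i q_i/a_{ii}$ with $q$ a probability vector ($q=p$ or $q=\pi$). Applying the minimum side of the lemma with $a_i := 1/a_{ii}$ and $\gamma_i := q_i$ gives $\min_i q_i/a_{ii} \leq 1/\trace(A)$, with equality exactly when all $q_i/a_{ii}$ coincide, i.e.\ $q_i = a_{ii}/\trace(A)$; this selects $p_i = a_{ii}/\trace(A)$ in (i) and, through $\pi_i = a_{ii}/\trace(A)$, the weights $\beta_i = 1/\sqrt{a_{ii}}$ and the pick rule \eqref{hpd_greedy_opt:eq} in (ii), both yielding $\alpha_{\opt}$ of \eqref{alpha_opt:eq}. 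Equivalently, for the optimal greedy pick the maximum side of the lemma, with the same $a_i$ and $\gamma_i := |r_i^k|^2$, directly gives $\max_i |r_i^k|^2/a_{ii} \geq \|r^k\|_2^2/\trace(A)$, recovering $\alpha_{\opt}$ in one stroke. I expect the only genuinely delicate point to be the exact single-relaxation identity—specifically verifying that the cross term is real and that the factor $\omega(2-\omega)$ emerges cleanly—after which the decrease-and-optimize argument is routine; the hpd structure guarantees $a_{ii}>0$ and $\lambda_{\min}>0$, so no degeneracy arises.
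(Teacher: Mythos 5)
Your proposal is correct and follows essentially the same route as the paper: the exact one-relaxation energy identity $\|e^{k+1}\|_A^2=\|e^k\|_A^2-\omega(2-\omega)|r_i^k|^2/a_{ii}$, the spectral bound $\|r^k\|_2^2\geq\lambda_{\min}\|e^k\|_A^2$, and Lemma~\ref{convex_comb:lem} for the optimal choice of $p_i$ and $\beta_i$. The only cosmetic deviations are that you obtain $|r_i^k|^2\geq\pi_i\|r^k\|_2^2$ in part (ii) by direct summation rather than by citing the lemma, and you phrase the spectral estimate via an eigenbasis instead of $\langle r^k,A^{-1}r^k\rangle\leq\lambda_{\min}^{-1}\|r^k\|_2^2$; both are equivalent to the paper's steps.
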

\begin{proof} 
If in relaxation $k$ we choose to update component $i$, then 
\begin{eqnarray}
\| x^{k+1}-x^*\|_A^2%
&=& \| x^k-x^*\|_A^2 + 2 \Re  \langle x^k-x^*, \tfrac{\omega r^k_i}{a_{ii}} e_i \rangle_A  +
     \langle \tfrac{\omega r^k_i}{a_{ii}} e_i, \tfrac{\omega r^k_i}{a_{ii}} e_i \rangle_A \nonumber \\
&=& \| x^k-x^*\|_A^2 - 2 \Re\langle r^k, \tfrac{\omega r^k_i}{a_{ii}} e_i \rangle  +
     \langle \tfrac{\omega r^k_i}{a_{ii}} Ae_i, \tfrac{\omega r^k_i}{a_{ii}} e_i \rangle \\
&=& \| x^k-x^*\|_A^2 - \tfrac{2\omega}{a_{ii}} |r^k_i|^2  + \tfrac{\omega^2}{a_{ii}^2} |r^k_i|^2a_{ii} \nonumber \\
&=& \| x^k-x^*\|_A^2 - \tfrac{\omega(2-\omega)}{a_{ii}} |r^k_i|^2. \label{update_hpd:eq}
\end{eqnarray}
Therefore, in randomized Gauss-Seidel the expected value for 
$\| x^{k+1}-x^*\|_A^2$, conditioned to the given value for $x^k$, is
\begin{eqnarray*}
\Ex \left(\| x^{k+1}-x^*\|_A^2 \, \big| \, x^k \right) &=& \sum_{i=1}^n p_i \left( \| x^k-x^*\|_A^2-\tfrac{\omega(2-\omega)}{a_{ii}} |r^k_i|^2 \right) \\
&=& \| x^k-x^*\|_A^2 -  \omega(2-\omega) \sum_{i=1}^n \frac{p_i}{a_{ii}} |r^k_i|^2 \\
&\leq& \| x^k-x^*\|_A^2 -  \omega(2-\omega) \min_{i=1}^n \frac{p_i}{a_{ii}} \|r^k\|^2 \\
&\leq&  \left(1-\omega(2-\omega)\lmin  \min_{i=1}^n \frac{p_i}{a_{ii}} \right) \| x^k-x^*\|_A^2 ~,
\end{eqnarray*}
with the last inequality holding due to $\langle x^k-x^*,x^k-x^* \rangle_A = \langle r^k, A^{-1}r^k \rangle  
\leq \tfrac{1}{\lmin}  \langle r^k, r^k \rangle $. This gives \eqref{expected_val_hpd:eq}. If we have 
$p_i = {a_{ii}}/{\trace(A)}$, 
then $\min_{i=1}^n ({p_i}/{a_{ii}}) = {1}/{\trace(A)}$, and this is larger or equal than 
$\min_{i=1}^n ({p_i}/{a_{ii}})$ 
for any choice of the probabilities $p_i$ by Lemma~\ref{convex_comb:lem}. This gives the second statement in part (i).

To prove part (ii) we observe that from the greedy pick rule $\beta_i |r_i^k| \geq  \max_{j=1}^n \beta_j |r_j^k|$  we have $\beta_i^2 |r^k_i|^2/\|r^k\|^2 \geq \beta_j^2 |r^k_j|^2/\|r^k\|^2$ for all $j$ which, using Lemma~\ref{convex_comb:lem} (with $\gamma_j = |r_j^k|^2$) gives
\[
  \frac{|r_i^k|^2}{a_{ii}} = \frac{\beta_i^2 \cdot  |r_i^k|^2}{\beta_i^2 a_{ii}} 
  = \frac{1}{\beta_i^2 a_{ii}} \max_{j=1}^n  \beta_j^2 |r_j^k|^2 \geq \frac{1}{\beta_i^2 a_{ii}} \frac{\|r^k\|^2}{\sum_{\ell=1}^n 1/\beta_\ell^2}~, 
\]
from which we deduce
\[
 \frac{|r_i^k|^2}{a_{ii}} \geq \min_{j=1}^{n} \frac{\pi_j}{a_{jj}} \|r^k\|^2 \mbox{ with } \pi_j=  \frac{1/\beta_j^2}{\sum_{\ell=1}^n 1/\beta_\ell^2} ~ \cdot
\]
So \eqref{update_hpd:eq} this time yields
\[
\|x^{k+1}-x^*\|_A^2 \leq \left( 1-\omega(2-\omega)\lambda_{\min} \min_{j=1}^n \frac{\pi_j}{a_{jj}} \right) \|x^k-x^*\|_A^2 ~,
\]
which results in \eqref{bound_greedy_hpd:eq}.
Finally, using Lemma~\ref{convex_comb:lem} (with $\gamma_i = \pi_i$) we obtain
\[
 \min_{i=1}^n \frac{\pi_i}{a_{ii}} \leq \frac{1}{\trace(A)} ~, 
\]
and for the choice $\beta_i = {1}/{\sqrt{a_{ii}}}$ we have $\pi_i = a_{ii}/\trace(A)$ and thus 
\[
 \frac{\pi_i}{a_{ii}}  = \frac{1}{\trace(A)} \mbox{ for } i=1,\ldots,n. 
\]
\end{proof}

We note that if in randomized Gauss-Seidel we 
choose all probabilities to be equal, $p_i = {1}/{n}$ for all $i$, then 
\[
\alpha^\rGS = \omega(2-\omega)\frac{\lambda_{\min}}{n \max_{i=1}^n a_{ii}}
\]
in \eqref{expected_val_hpd:eq}, which is smaller than $\alpha_{\opt}$ unless all diagonal elements $a_{ii}$ are equal. 
\af{We have a completely analogous situation for Gauss-Southwell: If we take the unweighted greedy pick rule 
\eqref{standard_pick:eq}, we obtain a value for $\alpha^{\GSW}$ 
which, interestingly, is the same than $\alpha^{\rGS}$ for 
randomized Gauss-Seidel with uniform probabilities. And this 
value is smaller than the value $\alpha_{\opt}$ that we 
obtain for the weighted greedy pick rule 
\eqref{hpd_greedy_opt:eq}, a value which is, interestingly 
again, equal what we obtain as the maximum value for $\alpha^{\rGS}$ in the randomized method (with the weighted probabilities $p_i = a_{ii}/\trace(A)$). }

Note also that if one scales the hpd matrix $A$ symmetrically so that it has unit diagonal, then the
greedy pick~\eqref{hpd_greedy_opt:eq} in Theorem~\ref{GS_hpd:thm} (ii) reduces to the standard
Gauss-Southwell pick~\eqref{standard_pick:eq}: Let $G= D^{-1/2} A D^{-1/2}$, then, 
the system~\eqref{linear_system:eq} is equivalent to $Gy =  D^{-1/2}b$ with the change of variables
$x = D^{1/2}y$. Running Algorithm~\ref{greedy_linear:alg} for $A$ and $b$ in the variables $x$ with the greedy pick \eqref{weighted_pick:eq} with $\beta_i = 1/\sqrt{a_{ii}}$ 
is equivalent to running the same algorithm for $G$ and $D^{-1/2}b$ in the variables $y$
with the standard greedy pick \eqref{standard_pick:eq}.
One can then express the bounds of the theorem in the scaled variables in the appropriate energy norm, since we have
$
\|y^k - y^* \|_G = \| D^{1/2} y^k - D^{1/2} y^* \|_A = \| x^k - x^* \|_A ;
$
cf.~\cite[Section 3.1]{Avron.JACM.15}.

\subsection*{Numerical example} \label{hpd_example:subsec}
Throughout this paper we give illustrative numerical examples based on the convection-diffusion equation for a concentration $c = c(x,y,t): [0,1] \times [0,1] \times [0,T] \to \mathC$
\begin{eqnarray} \label{advec_diff:eq}
\frac{\partial }{\partial t} c &=& - \frac{\partial }{\partial x} \alpha \frac{\partial }{\partial x} c
- \frac{\partial }{\partial y} \beta \frac{\partial }{\partial y} c + \frac{\partial }{\partial x} \nu c + \frac{\partial }{\partial y} \mu c, \\
& & c(x,y,t) \, =\, 0 \mbox{ if } (x,y) \in \partial ([0,1] \times [0,1]), \nonumber \\
& & c(x,y,0) \, =\, c_0(x,y) \mbox{ for } (x,y) \in [0,1] \times [0,1]. \nonumber
\end{eqnarray}
The positive diffusion coefficients $\alpha$ and $\beta$ are allowed to depend on $x$ and $y$, $\alpha = \alpha(x,y), \beta = \beta(x,y)$, and this also holds for the velocity field $(\nu,\mu) = (\nu(x,y),\mu(x,y))$. We discretize in space using standard finite differences with $N$ interior equispaced grid points in each direction. This leaves us with the semi-discretized system 
\[
\frac{\partial }{\partial t} c = B c, \enspace c(x,y,0) = c_0(x,y) \mbox{ for all grid points } x,y,
\]
where now $c=c(t)$ is a two-dimensional array, each component corresponding to one grid point. 
Using the implicit Euler rule as a symplectic integrator means that at a given time $t$ and a stepsize $\tau$ we have to solve
\begin{equation} \label{the_system:eq}
(\underbrace{I+\frac{\tau}{2}B}_{=: A})c(t+1) = \frac{\tau}{2} B c(t)
\end{equation}
for $c(t+1)$.
We illustrate the covergence behavior of the Gauss-Seidel variants considered in this paper when solving the system \eqref{the_system:eq} for appropriate choices of $\alpha,\beta, \mu$ and $\nu$.

Since Theorem~\ref{GS_hpd:thm} deals with the hpd case, we now assume that there is no convection, $\mu = \nu = 0$. Then $B$ is the discretization of the diffusive term using central finite differences and as such it is an irreducible diagonally dominant M-matrix and thus hpd; see, e.g., \cite{BermanPlemmons}.
Accordingly, $A = I + \frac{\tau}{2}B$ is hpd as well. We took $N = 100$ which gives a spacing of $h=\tfrac{1}{N+1}$, and $\tau = 0.5h^2$ and 
we consider two cases:
constant diffusion coefficients 
\begin{equation} \label{const_diffusion:eq}
\alpha(x,y) = \beta(x,y) = 1,
\end{equation}
which gives a constant diagonal in $A$, 
and 
non-constant diffusion coefficients
\begin{equation} \label{var_diffusion:eq}
\alpha(x,y) = \beta(x,y) = 1+9 (x+y) ,
\end{equation}
which makes the entries on the diagonal of $A = I + \frac{\tau}{2}B$ vary between $1+\frac{4\tau}{2h^2}$ 
and $1+\frac{38\tau}{2h^2}$, i.e., between 2 and 9.5. 

Figure~\ref{hpd:fig} reports the numerical results. We chose the right hand side $\frac{\tau}{2} B c(t)$ 
in~\eqref{the_system:eq} as $Az$, where $z$ is the discretized evaluation of the function $xy(1-x)(1-y)$. 
So we know the exact solution, which allows us to report $A$-norms of the error, which is what we 
provided bounds for in Theorem~\ref{GS_hpd:thm}. The figure displays these $A$-norms only after every $n
$ relaxations, which we treat as one ``iteration'', since $n$ relaxations indeed make make up one iteration in standard Gauss-Seidel.

\begin{figure}
\centerline{\includegraphics[width=0.5\textwidth]{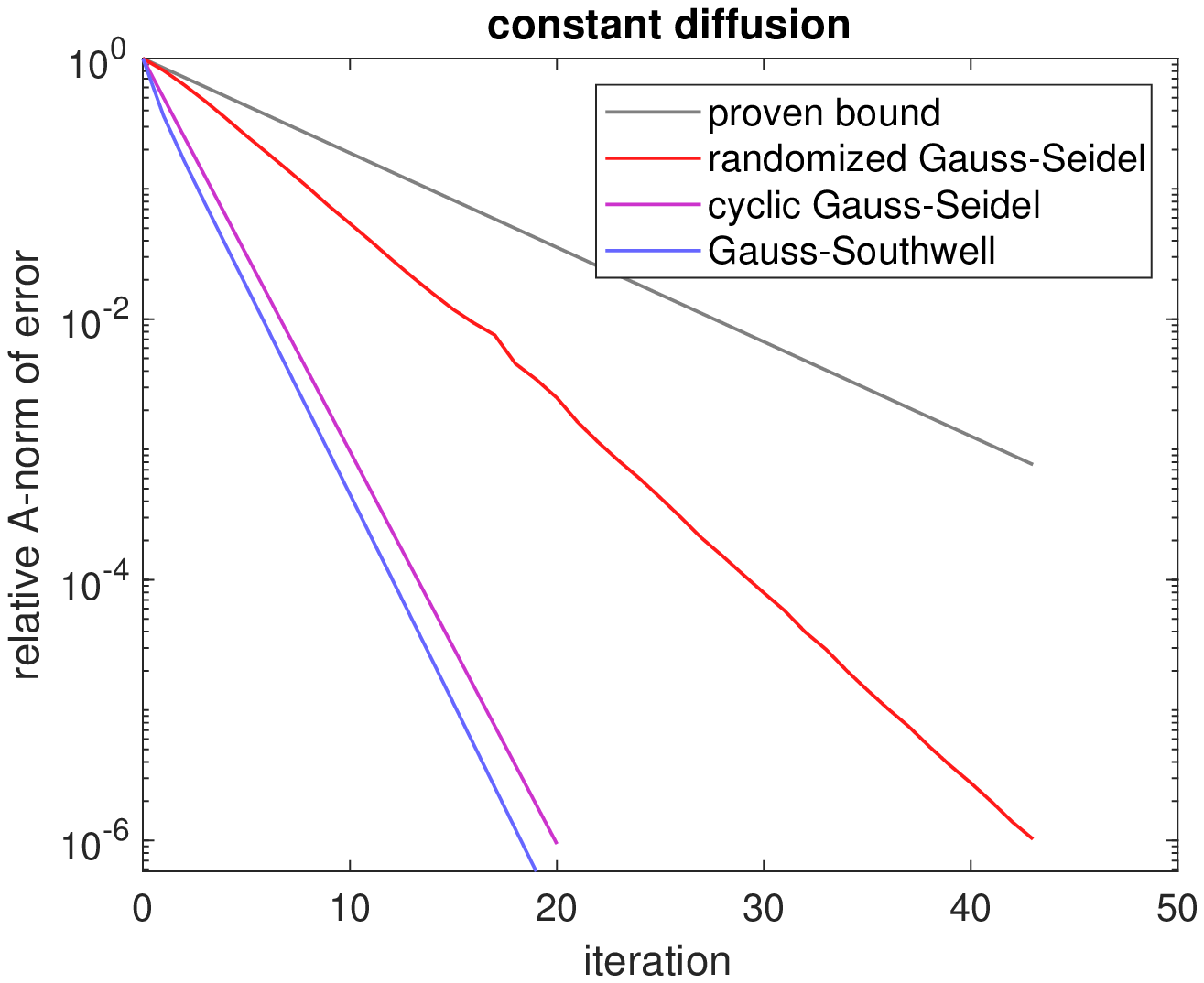}
\includegraphics[width=0.5\textwidth]{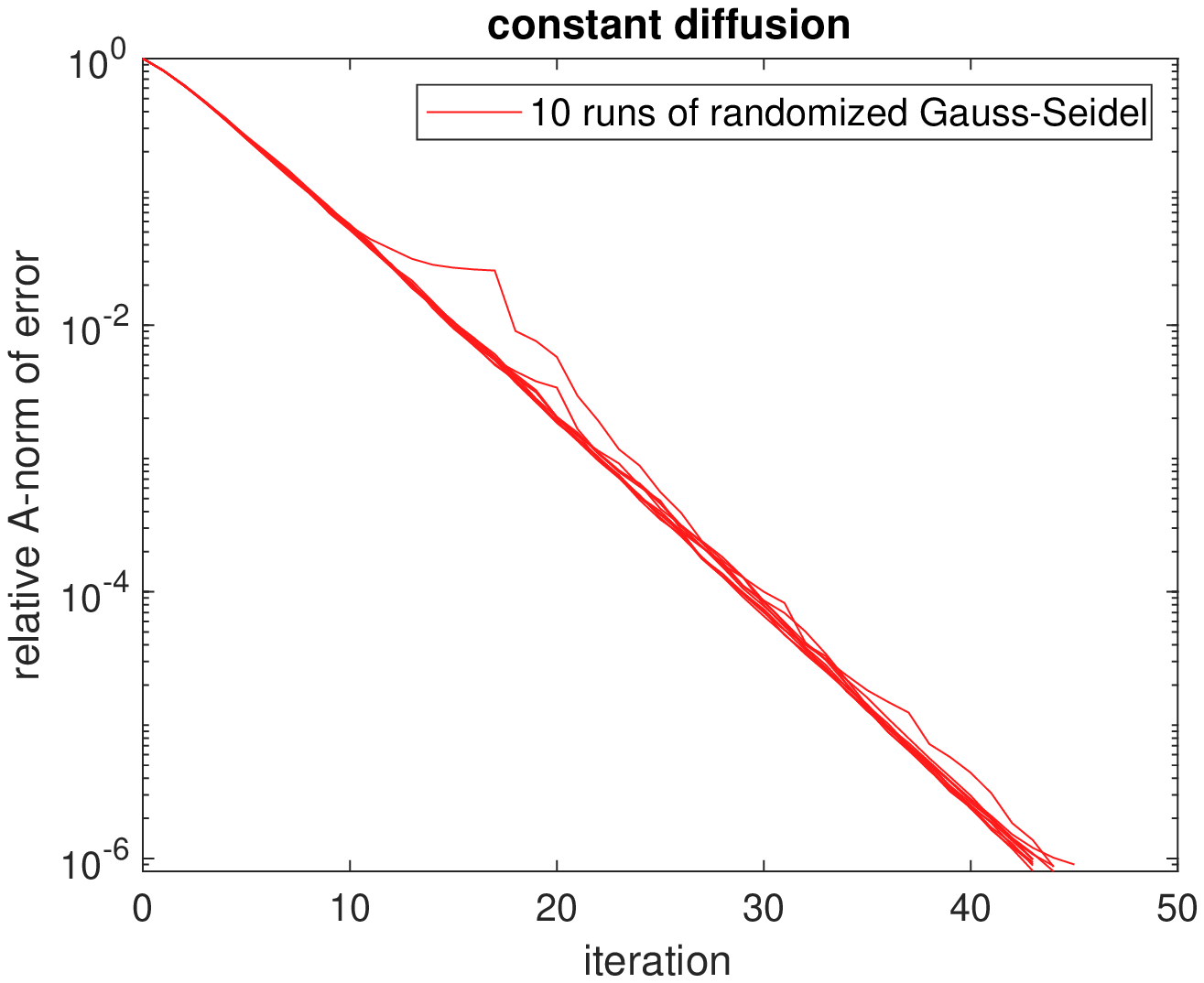}
}
\centerline{\includegraphics[width=0.5\textwidth]{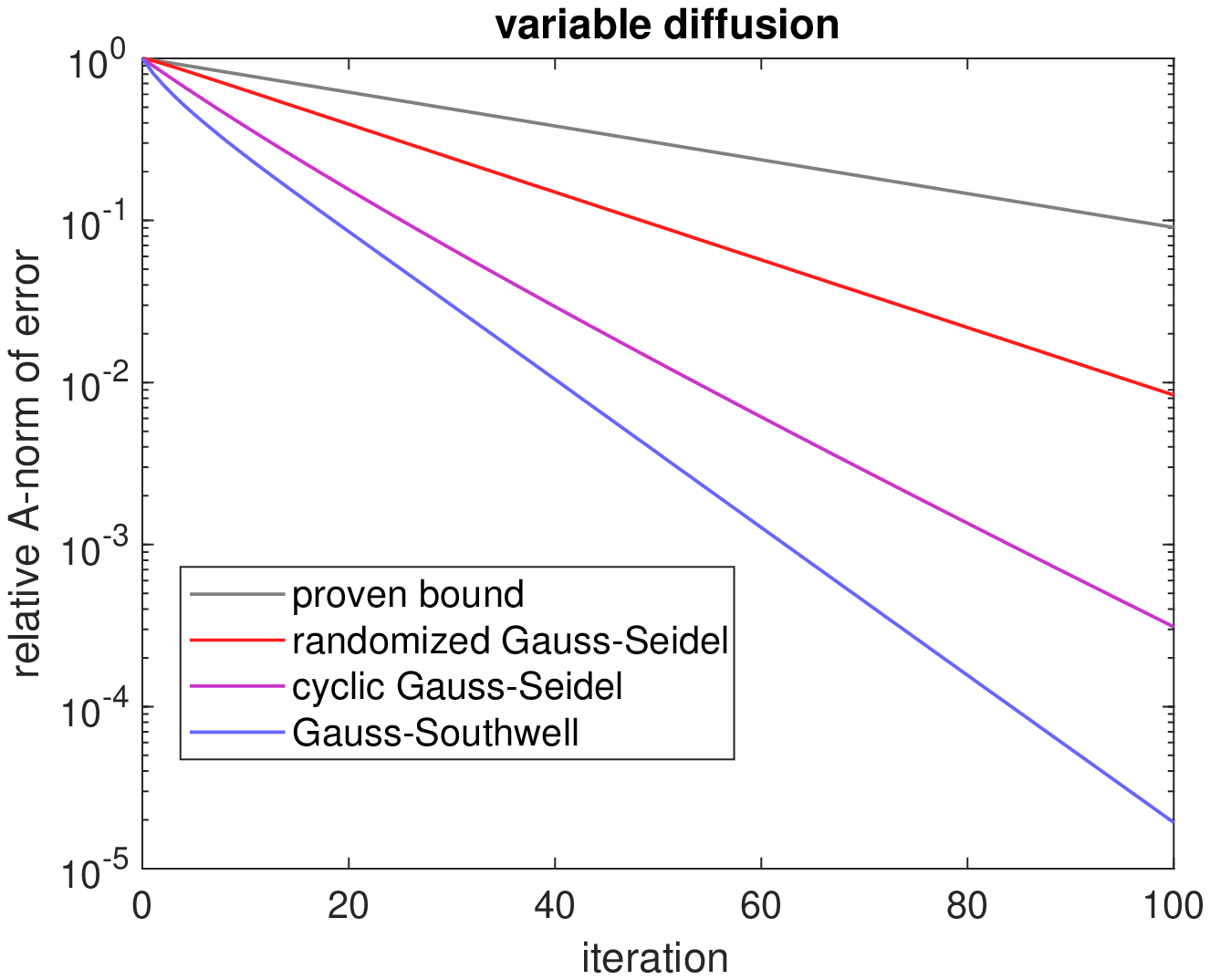}
\includegraphics[width=0.5\textwidth]{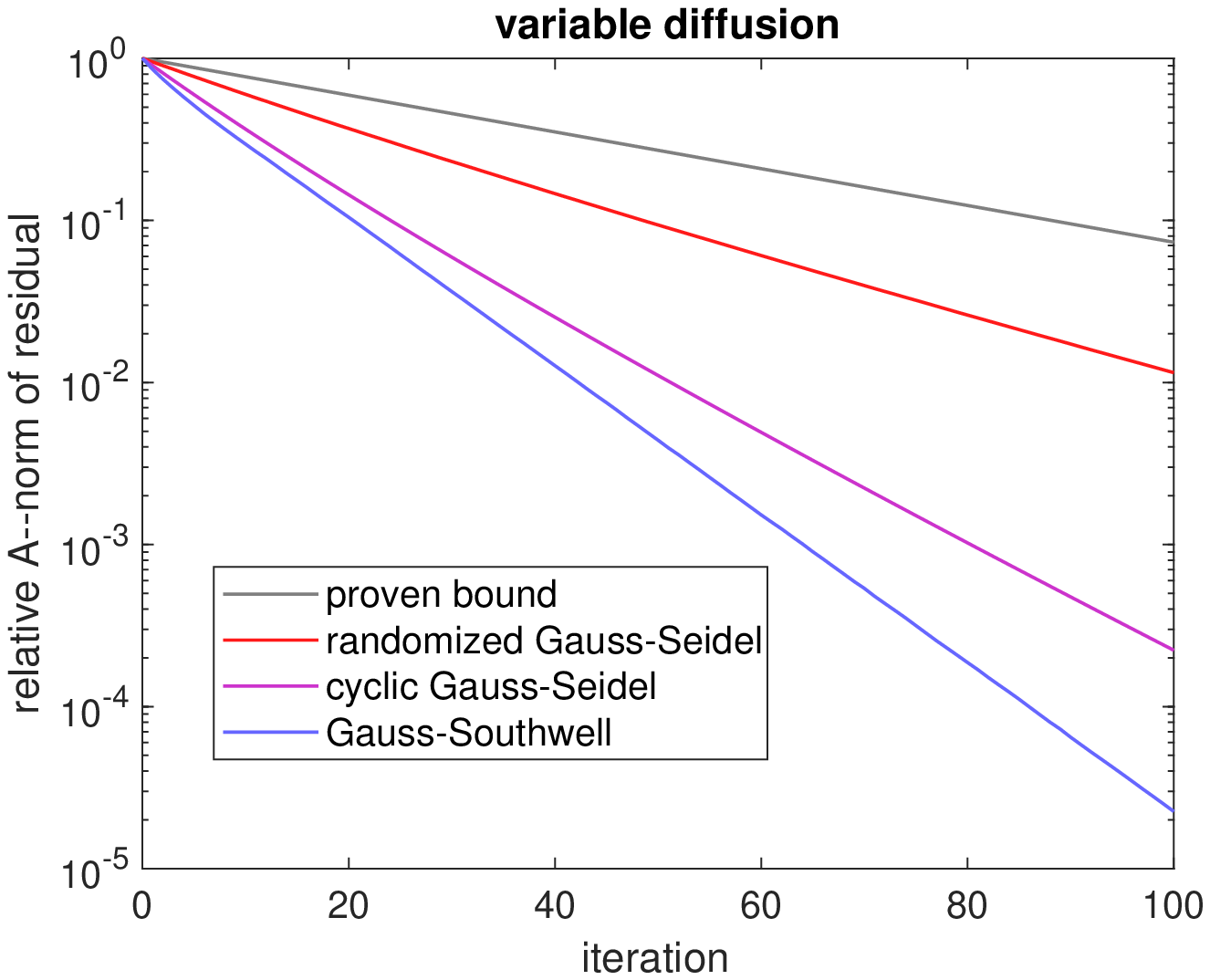}
}
\caption{Gauss-Seidel, randomized Gauss-Seidel and Gauss-Southwell. \textit{Top:} constant diffusion \eqref{const_diffusion:eq}. \textit{Bottom:} variable diffusion \eqref{var_diffusion:eq}. See text for the other parameters used. Top-down order in the legend corresponds to top-down order of the {plotted lines}. \label{hpd:fig}}
\end{figure}

The top row of Figure~\ref{hpd:fig} gives results for the constant diffusion case \eqref{const_diffusion:eq}. The left diagram shows the relative $A$-norm of the errors for randomized Gauss-Seidel with uniform probabilities $p_i = 1/n$, standard (``cyclic'') Gauss-Seidel and Gauss-Southwell with the greedy pick rule \eqref{standard_pick:eq}. 
For randomized Gauss-Seidel we actually give here---as in all other experiments---the averages 
for ten runs which we regard as an approximation to the expected values. 
The plot to the right shows that the convergence behavior of these ten different runs exhibit only mild deviations. The plot on the left also contains the bound $(1- \alpha_{\opt})^{k/2}$ of Theorem~\ref{GS_hpd:thm}. We see that randomized Gauss-Seidel converges approximately half as fast as cyclic Gauss-Seidel, that Gauss-Southwell converges somewhat faster than cyclic Gauss-Seidel, and that the thoretical bounds are not very tight.

The bottom row of Figure~\ref{hpd:fig} shows results for variable diffusion according to 
\eqref{var_diffusion:eq}. Convergence is slower than in the constant diffusion case. The left plot has the results for the ``optimial'' probabilities  $p_i = \frac{a_{ii}}{\trace(A)}$ and the ``optimal'' greedy-pick \eqref{hpd_greedy_opt:eq}, for which the bounds of Theorem~\ref{GS_hpd:thm} hold again and are also reported, whereas the right plot shows the results for the uniform probabilities $p_i = 1/n$ and 
the greedy-pick rule \eqref{standard_pick:eq}. 
\af{In this case, the bound of 
Theorem~\ref{GS_hpd:thm} holds with $\alpha^{\rGS} = 
\alpha^{\GSW} = (\lambda_{\min}/n) \min_{i=1} 
1/a_{ii}$, and this bound is also plotted.} 
Interestingly, the two plots are virtually 
indistinguishable, except for a tiny improvement of 
randomized Gauss-Seidel when using ``optimal'' 
probabilities. We conclude that the choice of 
probabilities or the greedy pick rule has only a 
very marginal effect in this example. 
\af{The plots also show that the proven bounds 
can be pessimistic in the sense that the actual convergence is significantly faster. This is not uncommon when dealing with randomized algorithms, and we will address this further when discussing the numerical results illustrating the new convergence theorems for diagonally dominant matrices in Section~\ref{random_h:sec}.}

Although not being further addressed in this paper, we now shortly present basic numerical results on the performance 
of the various relaxation methods when used as a smoother in a multigrid method. This was mentioned as a possible application in the introduction. We consider a V-cycle multigrid method for the standard discrete 
Laplacian on a $N \times N$ grid (with $N{+1}$ a power of 2) with Dirichlet boundary conditions. Restriction and prolongation are done via the usual linear interpolation, doubling the grid spacing from one level to the next and going down to a minimum 
grid size of $7 \times 7$; see \cite{Trottenberg}. For each smoother, on each level $\ell$ with a grid size of $N_\ell \times N_\ell$ we perform a constant number of $s$ post- and $s$ pre-smoothing ``iterations'' amounting to $s N_\ell$ relaxations. For standard Gauss-Seidel and Gauss-Southwell we use $s = 1$, whereas for randomized Gauss-Seidel we tested 
$s =1, s = 1.5$ and $s = 2$. 

\begin{figure}
\centerline{\includegraphics[width=0.5\textwidth]{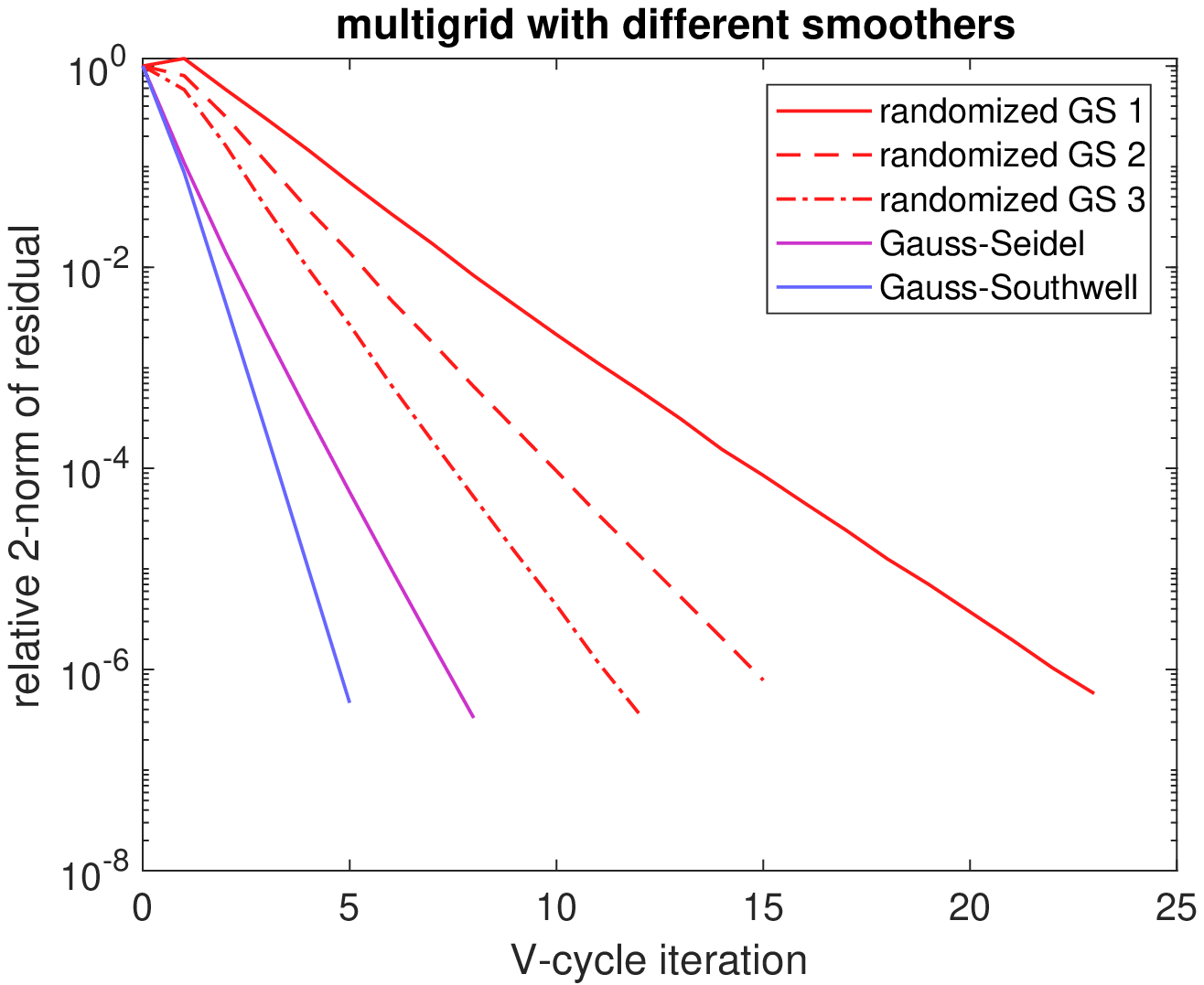}
\includegraphics[width=0.5\textwidth]{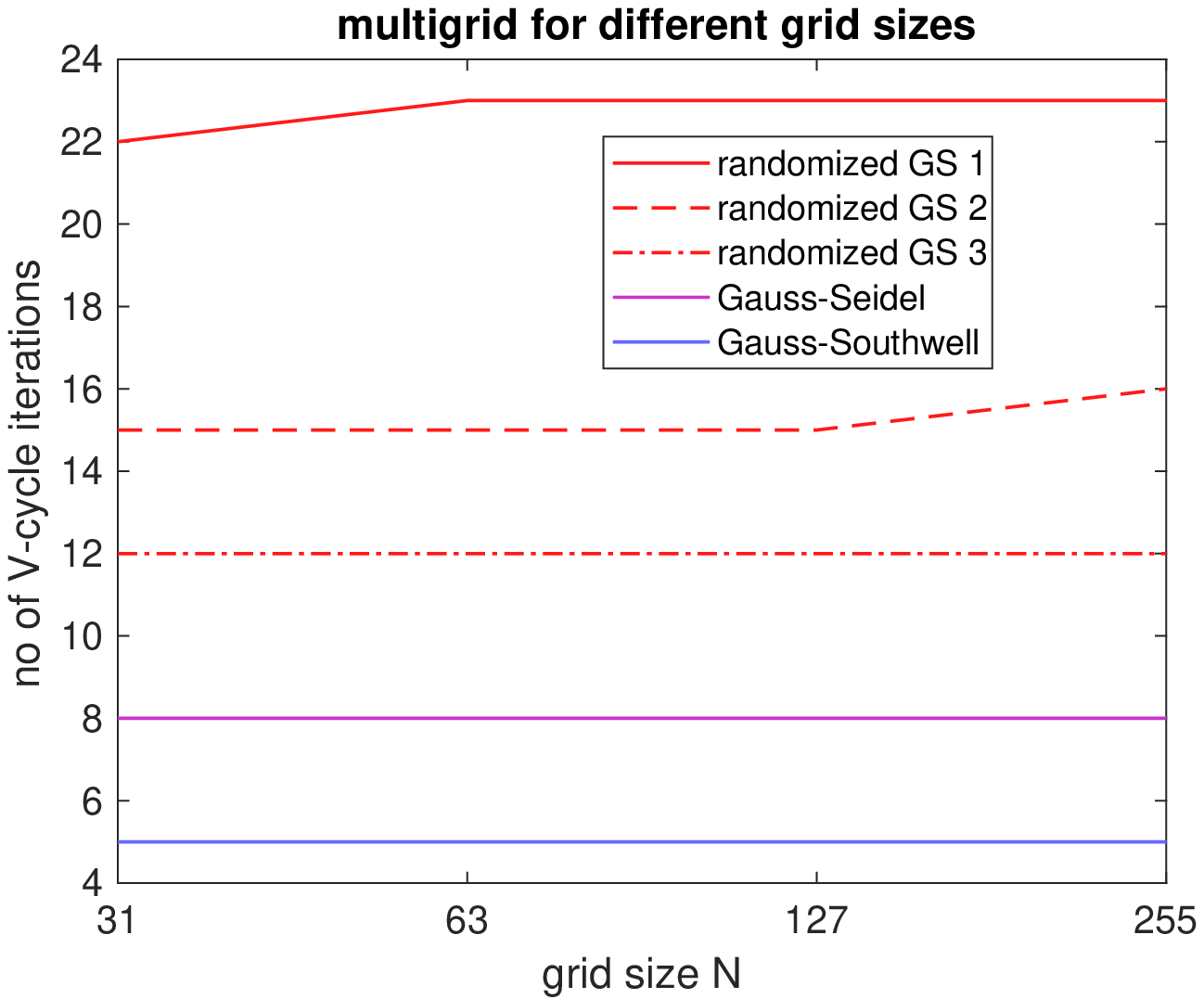}
}
\caption{Different relaxation schemes as smoothers in multigrid. Randomized GS 1, 2 and 3 correspond to $s =1$, $s = 1.5$ and $s = 2$ smoothing ``iterations'' in randomized Gauss-Seidel. \textit{Left:} relative residual norms for $N=127$. \textit{Right:} no.\ of V-cycles to reduce the initial residual by a factor of $10^{-6}$ for different grid sizes. 
\label{mg:fig}}
\end{figure}

Figure~\ref{mg:fig} indicates that randomized Gauss-Seidel has its potential for being used 
as a smoother in multigrid, provided that its slower convergence can be outweighed by a more efficient implementation, as it might be possible in a parallel environment. The left plot gives convergence plots for $N=127$. 
The right plot reports the number of V-cycles required to reduce the initial residual by a factor of $10^{-6}$ for various grid sizes $N$. We see that the convergence speed with randomized Gauss-Seidel and Gauss-Southwell smoothing is independent of the grid size just as with standard Gauss-Seidel smoothing, thus preserving one of the most important properties of the multigrid approach.  An interesting 
feature that random Gauss-Seidel shares with the Gauss-Southwell method is that we can 
prescribe a fractional number of smoothing iterations and thus adapt the computational work 
on a finer scale than with standard Gauss-Seidel. The figure also shows that for this 
example Gauss-Southwell yields faster convergence than standard Gauss-Seidel for the same 
number of relaxations.      

\section{Results for non-Hermitian matrices} \label{random_h:sec}
We now present several theorems which are counterparts to
Theorem~\ref{GS_hpd:thm} for classes of not necessarily Hermitian matrices, and
the iteration matrix $H=M^{-1}N$ in \eqref{generic_linear:eq} may arise from a general splitting $A = M-N$ other than the (relaxed) Jacobi splitting.  
In place of the $A$-norm we will now use weighted $\ell_1$-norms.

\begin{definition} \label{weighted_l1:def} For a given vector $u \in \mathR^n$ with positive components $u_i >0$, 
$i=1,\ldots,n$, the weighted $\ell_1$-norm on $\mathC^n$ is defined as
\[
\| x\|_{u,1} = \sum_{j=1}^n u_j |x_j| .
\]
\end{definition}

Clearly, the standard $\ell_1$-norm is obtained for $u=(1,\ldots,1)^T$. It is easy to see that the associated operator norm for $A \in \mathC^{n \times n}$ is the weighted column sum-norm
\[
\| A \|_{u,1} = \max_{j=1}^n  \frac{1}{u_j}  \sum_{i=1}^n u_i |a_{ij}|   .
\]

In the theorems to follow we will state results in terms of the preconditioned residual 
\[
\hat{r}^k = c - (I-H)x^k = M^{-1}(b-Ax^k) = M^{-1}A(x^*-x^k),
\]
and we denote $K$ the preconditioned matrix $K = M^{-1}A = I-H$.

Our first theorem assumes $\|H\|_{u,1} < 1$ and gives bounds on  the weighted $\ell_1$-norms of the preconditioned residuals in Algorithms~\ref{greedy_linear:alg} and \ref{randomized_linear:alg} similar in nature 
to those in Theorem~\ref{GS_hpd:thm}. 

\begin{theorem}\label{H_conv:thm} Consider the weighted column sums 
\[
\rho_j = \frac{1}{u_j} \sum_{i=1}^n u_i |h_{ij}|, \enspace j=1,\ldots,n,
\]
and assume that $\|H\|_{1,u} = \max_{j=1}^n \rho_j < 1$. Set $\gamma_j := (1-\rho_j)^{-1}, j=1,\ldots,n$.
\begin{itemize}
    \item[(i)]  In randomized relaxation (Algorithm~\ref{randomized_linear:alg}), putting  
\begin{equation} \label{alpha_l1:eq}
\alpha^\r = \min_{j=1}^n \frac{p_j}{\gamma_j} ,
\end{equation}
 the expected values for the weighted $\ell_1$-norm of the preconditioned residuals $\hat r^k= M^{-1}r^k = c-Kx^k$ of the iterates $x^k$  satisfy
\begin{equation} \label{randomized_H_conv:eq}
\mathbb{E}(\|\hat r^k\|_{1,u}) \leq \left(1-\alpha^\r \right)^{k} \| \hat r^0  \|_{1,u}.
\end{equation}
The quantity $\alpha^\r$ in \eqref{alpha_l1:eq} is maximized if one takes 
\begin{equation} \label{optimal_p:eq}
p_i = \gamma_i/\sum_{j=1}^n \gamma_j, \enspace j=1,\ldots,n; 
\end{equation}
its value then is $\alpha^\r = \alpha_{\opt} := 1 /  \sum_{j=1}^n \gamma_j$.

\item[(ii)] In greedy relaxation (Algorithm~\ref{greedy_linear:alg}), with the greedy pick rule \eqref{prec_pick:eq} based on the preconditioned residual, putting
\[
\alpha^\g =  \min_{j=1}^n \frac{\pi_j}{\gamma_j}, \mbox{ where } \pi_j = \frac{u_j/\beta_j}{\sum_{\ell=1}^n u_\ell/\beta_\ell},
\]
the weighted $\ell_1$-norms of the preconditioned residuals $\hat r^k= M^{-1}r^k = c-Kx^k$ of the iterates $x^k$ satisfy
\begin{equation} \label{greedy_H_conv:eq}
 \|\hat r^k\|_{1,u} \leq \left(1-\alpha^\g \right)^{k} \| \hat r^0  \|_{1,u}.
\end{equation}
Moreover, $\alpha^\g$ is maximized if we take 
\begin{equation} \label{optimal_beta:eq}
\beta_j = u_j/\gamma_j,  \enspace j=1,\ldots,n; 
\end{equation}
its maximal value is identical to $\alpha_\opt$ from part (i).
\end{itemize}
\end{theorem}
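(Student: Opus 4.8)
The plan is to follow the architecture of the proof of Theorem~\ref{GS_hpd:thm}, with the weighted $\ell_1$-norm $\|\cdot\|_{1,u}$ taking the role of the energy norm and the column sums $\rho_j$ taking the role of the diagonal entries. Everything will rest on a single one-step estimate for how one relaxation changes $\|\hat r^k\|_{1,u}$, the analogue of \eqref{update_hpd:eq}. First I would derive that estimate. If component $i$ is relaxed, the update reads $x^{k+1} = x^k + \hat r_i^k e_i$ (recall $\hat r_i^k = x_i^{k+1}-x_i^k$), so that, since $\hat r^k = c-(I-H)x^k$,
\[
\hat r^{k+1} = \hat r^k - \hat r_i^k (I-H)e_i .
\]
Reading this componentwise gives $\hat r_i^{k+1} = h_{ii}\hat r_i^k$ and $\hat r_j^{k+1} = \hat r_j^k + h_{ji}\hat r_i^k$ for $j\neq i$. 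Applying the triangle inequality inside $\|\cdot\|_{1,u}$ and collecting every term carrying the factor $|\hat r_i^k|$, the weighted column sum of column $i$ reappears, namely $\sum_{j} u_j|h_{ji}| = u_i\rho_i$, which yields the key decrease
\[
\|\hat r^{k+1}\|_{1,u} \leq \|\hat r^k\|_{1,u} - u_i(1-\rho_i)|\hat r_i^k| = \|\hat r^k\|_{1,u} - \frac{u_i}{\gamma_i}|\hat r_i^k| ,
\]
using $1-\rho_i = 1/\gamma_i$; the hypothesis $\rho_i<1$ is exactly what makes the subtracted term nonnegative.

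For part~(i) I would take the conditional expectation of this decrease over the random index, obtaining $\Ex(\|\hat r^{k+1}\|_{1,u}\mid x^k) = \|\hat r^k\|_{1,u} - \sum_i \tfrac{p_i}{\gamma_i}u_i|\hat r_i^k|$, and then bound the sum below by $(\min_i p_i/\gamma_i)\sum_i u_i|\hat r_i^k| = \alpha^\r\|\hat r^k\|_{1,u}$. Taking total expectations and iterating gives \eqref{randomized_H_conv:eq}. The optimality claim amounts to maximizing $\min_j p_j/\gamma_j$ over the simplex, which is exactly Lemma~\ref{convex_comb:lem} applied with its $a_j$ set to $1/\gamma_j$ and its $\gamma_j$ set to $p_j$: the resulting harmonic-type mean is $n/\sum_\ell \gamma_\ell$ and the arithmetic mean of the $p_j$ is $1/n$, so $\min_j p_j/\gamma_j \leq 1/\sum_\ell \gamma_\ell$, with equality for the balanced choice \eqref{optimal_p:eq}.

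Part~(ii) follows the same template. The greedy rule \eqref{prec_pick:eq} guarantees $\beta_j|\hat r_j^k|\leq \beta_i|\hat r_i^k|$ for all $j$, whence $\|\hat r^k\|_{1,u}\leq \beta_i|\hat r_i^k|\sum_j u_j/\beta_j$, i.e. $|\hat r_i^k|\geq \|\hat r^k\|_{1,u}/(\beta_i\sum_j u_j/\beta_j)$. Substituting into the key decrease turns the subtracted term into $\tfrac{\pi_i}{\gamma_i}\|\hat r^k\|_{1,u}\geq \alpha^\g\|\hat r^k\|_{1,u}$ with $\pi_i$ as stated, giving \eqref{greedy_H_conv:eq}. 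Since $(\pi_j)$ is itself a probability vector and ranges over the entire simplex as $(\beta_j)$ varies, the optimization of $\alpha^\g$ is identical to that in part~(i); the balanced choice $\pi_j \propto \gamma_j$ translates into $u_j/\beta_j \propto \gamma_j$, that is \eqref{optimal_beta:eq}, and returns the same value $\alpha_\opt$.

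The main obstacle I anticipate is the bookkeeping in the one-step estimate: one must carefully separate the diagonal contribution $u_i|h_{ii}||\hat r_i^k|$ from the off-diagonal perturbations of the remaining components and recognize that together they reconstitute precisely the weighted column sum $u_i\rho_i$. Everything downstream is a direct transcription of the hpd argument, the one genuinely new point being that here the one-step inequality is already phrased in the residual norm itself, so no auxiliary norm-equivalence (the counterpart of $\|x-x^*\|_A^2 \leq \lmin^{-1}\|r\|^2$ used in Theorem~\ref{GS_hpd:thm}) is required.
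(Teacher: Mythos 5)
Your proposal is correct and follows essentially the same route as the paper: the identical one-step decrease $\|\hat r^{k+1}\|_{1,u} \leq \|\hat r^{k}\|_{1,u} - (1-\rho_i)u_i|\hat r^k_i|$ obtained from $\hat r^{k+1}=\hat r^k - \hat r^k_i K e_i$, followed by the conditional expectation bound in part (i) and the ``max dominates the weighted average'' bound in part (ii), with the optimality of $p$ and $\beta$ settled by the same convex-combination argument (Lemma~\ref{convex_comb:lem}). The only differences are presentational, e.g.\ you phrase the part (ii) optimization as a reparametrization to the simplex of the $\pi_j$ rather than reapplying the lemma directly.
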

\begin{proof} If $i$ is the index chosen at iteration $k$, we have
\[
x_{i}^{k+1} = \sum_{j=1}^n h_{ij} x_i^{k} + c_i = x_i^{k} + \hat r^k_i,
\]
which gives
\[
x^{k+1} = x^{k} + \hat r^k_i e_i, \enspace 
\hat r^{k+1} = c - Kx^{k+1} = \hat r^{k}- \hat r^k_iKe_i.
\]
We therefore have
\begin{eqnarray}
\|\hat r^{k+1}\|_{1,u} &=& \sum_{\ell=1,\ell \neq i}^n {u_\ell}|\hat r_\ell^{k} + \hat r^k_i h_{\ell i}| + {u_i} |\hat r^k_i h_{ii}| \nonumber \\
&\leq&  \sum_{\ell=1,\ell \neq i}^n {u_\ell} |\hat r_\ell^{k}| + |\hat r^k_i| \sum_{\ell=1}^n {u_\ell}|h_{\ell i}| \nonumber \\
&=&  \sum_{\ell=1,\ell \neq i}^n {u_\ell} |\hat r_\ell^{k}| + {u_i}|\hat r^k_i| \cdot \frac{1}{u_i} \sum_{\ell=1}^n {u_\ell}|h_{\ell i}| \nonumber \\
&=&  \|\hat r^{k}\|_{1,u} - (1-\rho_i){u_i}|\hat r^k_i|. \label{basic_inequality:eq}
\end{eqnarray}
To prove part (i) we use \eqref{basic_inequality:eq} to see that the expected value of the norm of the residual 
$\hat r^{k+1}$, conditioned to the given value for $\hat r^k$ satisfies 
\[
\mathbb{E} \left(\|\hat r^{k+1}\|_{1,u} \, \big| \, \hat r^k \right) \leq \sum_{i=1}^n p_i \left( \| \hat r^{k}\|_{1,u} - (1-\rho_i){u_i}|\hat r^k_i|\right) \leq (1-\alpha) \|\hat r^k\|_{1,u},
\]
from which we get \eqref{randomized_H_conv:eq}. Moreover, the minimum $\alpha^\r = \min_{j=1}^n p_j/\gamma_j$ is not larger than the 
convex combination $\sum_{j=1}^n \left(\gamma_j/\sum_{\ell=1}^n \gamma_\ell\right) \cdot  p_j/\gamma_j = 1/ \sum_{\ell=1}^n \gamma_\ell$, 
and this value is attained for $\alpha^\r$ if we choose $p_i = \gamma_i/\sum_{\ell=1}^n \gamma_\ell$. 

To prove part (ii) we observe that due to the greedy pick rule \eqref{prec_pick:eq} we have
\begin{eqnarray*}
\frac{u_i}{\gamma_i}|r_i^k| &=& \frac{u_i}{\gamma_i \beta_i} \max_{j=1}^n \frac{\beta_j}{u_j} u_j |r^k_j|,
\end{eqnarray*}
which, using Lemma~\ref{convex_comb:lem}, gives
\begin{eqnarray*}
\frac{u_i}{\gamma_i}|r_i^k| & \geq & \frac{u_i}{\gamma_i \beta_i} \frac{1}{\sum_{\ell=1}^n \frac{u_\ell}{\beta_\ell}} \sum_{j=1}^n u_j |r_j^k| \\
&= &\frac{u_i}{\gamma_i \beta_i} \frac{1}{\sum_{\ell=1}^n \frac{u_\ell}{\beta_\ell}} \|r^k\|_{1,u} \\
& \geq & \min_{j=1}^n \frac{u_j}{\gamma_i \beta_j} \frac{1}{\sum_{\ell=1}^n \frac{u_\ell}{\beta_\ell}} \|r^k\|_{1,u}. 
\end{eqnarray*}
Together with \eqref{basic_inequality:eq} this gives \eqref{greedy_H_conv:eq}. Finally, using Lemma~\ref{convex_comb:lem} again, we obtain
\[
  \min_{j=1}^n \frac{u_j}{\gamma_j\beta_j}  \leq \frac{1}{\sum_{j=1}^n \gamma_j} \sum_{j=1}^n \frac{u_j}{\beta_j} ~,
\]
which gives
\[
 \alpha^\g = \min_{j=1}^n \frac{u_j}{\gamma_j\beta_j} \frac{1}{\sum_{\ell=1}^n \frac{u_\ell}{\beta_\ell}}
\leq \frac{1}{\sum_{\ell=1}^n \gamma_\ell} = \alpha_\opt ~.
\]
And $\alpha_\opt$ is attained as value for $\alpha^\g$ if we take $\beta_j = u_j/\gamma_j$ for $j=1,\ldots,n$.
\end{proof}

\af{The convergence results of Theorem~\ref{H_conv:thm}
are given in terms of the weighted $\ell_1$-norm, since it is 
this norm for which we can prove a decrease in every relaxation 
due to the assumption $\|H\|_{u,1} < 1$. As we will soon see, for 
randomized Gauss-Seidel and Gauss-Southwell this assumption is 
equivalent to a (generalized) diagonal dominance assumption on 
$A$, a condition which is often fulfilled in applications and 
which can be checked easily, at least when the weights are all~1. 
In this context, it is worth mentioning that results like the bound \eqref{greedy_H_conv:eq} yield a bound on the $R_1$ convergence factor of the sequence $x^k-x^*$, the standard measure of the convergence rate for a linearly zero-convergent sequence defined as
\[
R_1(\{x^k-x\}) = \limsup \|x^k-x^*\|^{1/k};
\]
see, e.g., \cite{OrtegaRheinboldt1970}.
The $R_1$-factor is independent of the norm $\| \cdot 
\|$, and results like \eqref{greedy_H_conv:eq} may be 
interpreted in a norm-independent manner by saying 
that 
\[
R_1(\{x^k-x\}) \leq (1-\alpha^{\rGS}). 
\]
}

From Theorem~\ref{H_conv:thm} we see that with the 
optimal choices for the probabilities $p_i$ or the 
weights $\beta_i$, the proven bounds for randomized 
relaxation and greedy relaxation are identical. \af{So from the 
point of view of the established theory we cannot conclude that 
randomized would outperform greedy or vice-versa. In all our 
practical experiments, though, the greedy approach exposed 
faster convergence than the randomized approach.}

Also note that if we just take $p_i = 1/n$ for all~$i$ in randomized relaxation, then
\[
\alpha^\r = \min_{j=1}^n \frac{1}{n \gamma_j} = \frac{1}{n} \left( 1-\max_{j=1}^n \rho_j\right),
\]
and the same value is attained for $\alpha^\g$ in greedy relaxation if we take  $\beta_i = u_i$ for all~$i$. \af{The optimal value $\alpha_{\opt}$ is attained for the greedy pick rule \eqref{optimal_beta:eq}. If we take the standard greedy pick rule \eqref{standard_pick:eq}, i.e.\ } $\beta_i = 1$ for all $i$, we have
\[
\alpha^\g = \min_{j=1}^n \frac{u_j}{\gamma_j} \cdot \frac{1}{ \sum_{\ell =1}^n{u_\ell}},
\]
which, depending on the values of $u_j$ can be smaller or larger than 
\linebreak
$\tfrac{1}{n}(1-\max_{j=1}^n \rho_j)$ \af{but is certainly never larger than $\alpha_{\opt}$ obtained with the pick rule \eqref{optimal_beta:eq}.}  

In Theorem~\ref{H_conv:thm} we need to know the weights $u_i$ and with them the weighted column sums $\rho_i$ in order to be able to choose the probabilities $p_i$ or the greedy pick for which we get the strongest convergence bound, 
i.e., the largest value for~$\alpha^\r$ and~$\alpha^\g$. 
For example, it might be that we can take $u = (1,\ldots,1)$, such that $\| \cdot \|_{1,u}$ reduces to the standard $\ell_1$-norm. 
However, it might also be that we know that $\|H\|_{1,u} < 1$ for some $u>0$ without knowing $u$ explicitly. Theorem~\ref{H_conv:thm} tells us that we still have convergence for any choice of probabilities $p_i$ in randomized relaxation or weights $\beta_i$ in greedy relaxation, but the proven convergence bounds are weaker than for the ``optimal'' probabilites \eqref{optimal_p:eq} or weights \eqref{optimal_beta:eq}.

In light of this discussion it is interesting that for a particular vector of weights $u$ we can somehow reverse the situation, at least for the randomized iteration: We know how to choose the corresponding optimal values for the probabilities while we do not need to know $u$ explicitly. 

In order to prepare this result we recall the following left eigenvector version of the Perron-Frobenius 
theorem; see, e.g., \cite{varga-62}. 
Note that a square matrix $H$ is called irreducible if there is no permutation matrix $P$ such that $P^TAP$ has a $2\times 2$ block structure with a zero off-diagonal block. We also use the notation $H \geq 0$ (``$H$ is nonnegative'') if all entries $h_{ij}$ of $H$ are nonnegative. Similarly, a vector $w \in \mathR^n$ is called nonnegative ($w \geq 0$) or positive ($w > 0$), if all its components are nonnegative or positive, respectively.    

\begin{theorem} \label{Perron_Frobenius:thm} 
Let $H \in \mathR^{n \times n}$, $H \geq 0$, be irreducible. Then there exists a positive vector $w \in \mathR^n$, the ``left Perron vector'' of $H$, such that $w^T H = \rho(H) w^T$, where $\rho(H)$ is the spectral radius of $H$. Moreover, $w$ is unique up to scaling with a positive scalar.
\end{theorem}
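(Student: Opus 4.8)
The plan is to prove the standard right-eigenvector form of the Perron--Frobenius theorem for a nonnegative irreducible matrix and then transfer it to $H$ by transposition. First I would observe that $H \geq 0$ is irreducible if and only if $G := H^T$ is, that $G \geq 0$, and that $\rho(G) = \rho(H)$; hence it suffices to produce a positive vector $v$ with $Gv = \rho(H) v$, since then $w := v$ satisfies $w^T H = \rho(H) w^T$. The central tool is the elementary fact that, for an irreducible $G \geq 0$, the matrix $(I+G)^{n-1}$ is entrywise strictly positive: entry $(i,j)$ is a positively weighted count of the walks of length at most $n-1$ from $i$ to $j$ in the directed graph of $G$, and irreducibility means this graph is strongly connected, so such a walk always exists. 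Alongside this I would introduce the Collatz--Wielandt functional on the simplex $\Delta = \{x \geq 0 : \sum_i x_i = 1\}$,
\[
r(x) = \min_{i : x_i > 0} \frac{(Gx)_i}{x_i},
\]
i.e.\ the largest scalar $t$ with $Gx \geq tx$, and set $\rho^* = \sup_{x \in \Delta} r(x)$.

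Next I would show that $\rho^*$ is attained at a positive eigenvector of $G$ and that $\rho^* = \rho(H)$. The difficulty is that $r$ is only upper semicontinuous on $\Delta$. To circumvent this I would use the positivity of $(I+G)^{n-1}$ as a smoothing device: multiplying $Gx \geq r(x) x$ by the (commuting, nonnegative) matrix $(I+G)^{n-1}$ gives $Gy \geq r(x) y$ for $y := (I+G)^{n-1} x > 0$, so $r(y) \geq r(x)$. Hence the supremum is unchanged if taken over the compact set $\mathcal{M} = \{(I+G)^{n-1}x / \|(I+G)^{n-1}x\|_1 : x \in \Delta\}$ of strictly positive vectors, on which $r$ is continuous; thus the sup is a maximum, attained at some $v > 0$. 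If $Gv - \rho^* v$ were nonzero (it is $\geq 0$ by definition of $\rho^*$), then $(I+G)^{n-1}(Gv - \rho^* v) > 0$ strictly, i.e.\ $Gv' > \rho^* v'$ for $v' := (I+G)^{n-1} v > 0$, forcing $r(v') > \rho^*$, a contradiction; so $Gv = \rho^* v$ with $v > 0$. That $\rho^* = \rho(H)$ follows because $\rho^*$ is an eigenvalue (so $\rho^* \leq \rho(G) = \rho(H)$), while for any eigenpair $Gz = \lambda z$ the triangle inequality gives $G|z| \geq |\lambda|\,|z|$ componentwise, whence $|\lambda| \leq r(|z|/\|z\|_1) \leq \rho^*$; positivity $\rho^* > 0$ comes from $r(\tfrac1n\mathbf{1}) > 0$, as irreducibility rules out zero rows.

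Finally, uniqueness up to scaling follows from the same smoothing trick. Given two positive eigenvectors $v, v'$ for $\rho(H)$, set $t = \min_i v'_i / v_i$ so that $u := v' - t v \geq 0$ has a vanishing coordinate; $u$ is again an eigenvector of $G$ for $\rho(H)$, hence of $(I+G)^{n-1}$ for $(1+\rho(H))^{n-1}$. If $u \neq 0$, then $(I+G)^{n-1}u > 0$ strictly, contradicting that $(1+\rho(H))^{n-1}u$ has a zero coordinate; therefore $u = 0$ and $v' = tv$, and transposing back yields uniqueness of $w$. I expect the main obstacle to be precisely the lack of continuity of the Collatz--Wielandt functional on the full simplex; the whole argument hinges on replacing $\Delta$ by the strictly positive compact set $\mathcal{M}$ through multiplication by the positive matrix $(I+G)^{n-1}$, after which existence, the eigenvector property, the identification $\rho^* = \rho(H)$, and uniqueness all reduce to positivity comparisons.
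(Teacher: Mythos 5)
Your argument is correct, but note that the paper does not prove this statement at all: Theorem~\ref{Perron_Frobenius:thm} is recalled as a classical result with a pointer to \cite{varga-62}, so there is no in-paper proof to compare against. What you have written is a complete and accurate rendition of the standard Wielandt proof found in that reference: reduce to the right-eigenvector case by transposition, use strict positivity of $(I+G)^{n-1}$ for irreducible $G\geq 0$ as a smoothing device to make the Collatz--Wielandt functional attain its supremum on a compact set of strictly positive vectors, extract the eigenvector by the strict-positivity contradiction, identify the Collatz--Wielandt value with $\rho(G)$ via $G|z|\geq|\lambda||z|$, and get uniqueness by subtracting off the largest multiple of one positive eigenvector from another. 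All the delicate points are handled correctly, in particular the upper semicontinuity issue for $r$ on the boundary of the simplex and the commutation of $G$ with $(I+G)^{n-1}$. The only caveat worth recording is that your uniqueness argument establishes uniqueness up to scaling within the cone of positive (indeed nonnegative) eigenvectors, which is exactly what the theorem as stated asserts; it does not by itself show that the full eigenspace for $\rho(H)$ is one-dimensional, but that stronger fact is not claimed and is not needed anywhere in the paper.
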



A direct consequence of Theorem~\ref{Perron_Frobenius:thm} is that for $H\geq 0$ irreducible we have 
\[
\rho_j = \frac{1}{w_j} \sum_{i=1}^n w_i h_{ij} = \rho \mbox{ for } j=1,\ldots,n,
\]
and thus $\| H \|_{w,1} = \rho$.  

If $H \geq 0$ is not irreducible, a positive left Perron vector needs not necessarily exist. However, we have the following approximation result which we state as a lemma for future reference.

\begin{lemma} \label{approx_Perron:lem} Assume that $H \in \mathR^{n \times n}$ is nonnegative. Then, for any $\epsilon > 0$ there exists a positive vector $w_\epsilon > 0$ such that $w_\epsilon^T H \leq (\rho+\epsilon)w_\epsilon^T$.
\end{lemma}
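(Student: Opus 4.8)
The plan is to deduce the possibly reducible case from the irreducible one by means of a small positive perturbation, using Theorem~\ref{Perron_Frobenius:thm} as the engine and continuity of the spectral radius as the bridge. First I would fix the all-ones matrix $J \in \mathR^{n \times n}$ and, for a parameter $\delta > 0$, form the perturbed matrix $H_\delta = H + \delta J$. Since $H \geq 0$, every entry of $H_\delta$ is strictly positive; in particular $H_\delta \geq 0$ and $H_\delta$ is irreducible. Theorem~\ref{Perron_Frobenius:thm} therefore applies and furnishes a positive left Perron vector $w_\delta > 0$ satisfying $w_\delta^T H_\delta = \rho(H_\delta)\, w_\delta^T$.

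Next I would extract the desired one-sided inequality for $H$ itself. Because $\delta J \geq 0$ and $w_\delta > 0$, the vector $\delta\, w_\delta^T J$ is nonnegative, so componentwise
\[
w_\delta^T H = w_\delta^T H_\delta - \delta\, w_\delta^T J \leq w_\delta^T H_\delta = \rho(H_\delta)\, w_\delta^T.
\]
Thus for every $\delta > 0$ the perturbed Perron vector already yields $w_\delta^T H \leq \rho(H_\delta)\, w_\delta^T$, and it only remains to control the factor $\rho(H_\delta)$ from above.

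The final step is to invoke continuity: the eigenvalues of a matrix depend continuously on its entries (being the roots of the characteristic polynomial, whose coefficients are polynomial in the entries), whence $\rho(H_\delta) \to \rho(H) = \rho$ as $\delta \to 0^+$. Consequently, given $\epsilon > 0$, I would choose $\delta > 0$ small enough that $\rho(H_\delta) \leq \rho + \epsilon$, and then set $w_\epsilon := w_\delta$. Since $w_\epsilon > 0$, multiplying the inequality $\rho(H_\delta) \leq \rho + \epsilon$ by $w_\epsilon^T$ preserves it componentwise, and combining with the display above gives
\[
w_\epsilon^T H \leq \rho(H_\delta)\, w_\epsilon^T \leq (\rho + \epsilon)\, w_\epsilon^T,
\]
which is exactly the claim. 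The main obstacle is the justification of $\rho(H_\delta) \to \rho(H)$; this is the only genuinely analytic ingredient, and I would either cite the standard continuous dependence of the spectrum on the matrix entries or, if a self-contained argument is preferred, combine the monotonicity of the spectral radius on nonnegative matrices (which gives $\rho(H) \leq \rho(H_\delta)$) with an explicit upper estimate such as $\rho(H_\delta) \leq \|H_\delta\|_{1} \to \|H\|_{1}$ after an appropriate diagonal rescaling. Everything else is bookkeeping with nonnegativity and positivity of the vectors involved.
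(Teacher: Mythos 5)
Your proposal is correct and follows essentially the same route as the paper: perturb to the irreducible matrix $H_\delta = H + \delta E$ with $E$ the all-ones matrix, take its left Perron vector, and use continuity of the spectral radius to choose $\delta$ so that $\rho(H_\delta) \leq \rho + \epsilon$. You merely spell out the intermediate inequality $w_\delta^T H \leq w_\delta^T H_\delta$, which the paper leaves implicit.
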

\begin{proof} For given $\epsilon > 0$, due to the continuity of the spectral radius we can choose $\delta >0$ small enough such that the spectral radius of the irreducible matrix $H_\delta = H + \delta E$, $E$ the matrix of all ones, is less or equal than $\rho +\epsilon$. Now take $w_\epsilon$ as the left Perron vector of $H_\delta$.
\end{proof}

We are now ready to prove the following theorem where we use the notation $|H|$ for the matrix resulting from $H$ when replacing each entry by its absolute value. \af{Interestingly, the theorem establishes a situation where we know how to choose optimal probabilities (in the sense of the proven bounds) without explicit knowledge of the weights.}

\begin{theorem}\label{randomized_H_conv_w:thm} Assume that $\rho= \rho(|H|) < 1$ and consider randomized relaxation (Algorithm~\ref{randomized_linear:alg}).  
\begin{itemize} 
\item[(i)] If $H$ is irreducible, then there exists a positive vector of weights $w$ such that the 
weighted $\ell_1$-norm of the preconditioned residuals $\hat r^k= M^{-1}r^k$ of the iterates $x^k$ satisfies
\begin{equation*} \label{randomized_H_conv_w_irreducible:eq}
\mathbb{E}(\|\hat r^k\|_{1,w}) \leq \left(1-\alpha \right)^{k} \| \hat r^0  \|_{1,w}, \mbox{ where } \alpha = (1-\rho)\min_{j=1}^n p_j.
\end{equation*}
Moreover, $\alpha$ is maximized if one takes $p_j = 1/n$ for $j=1,\ldots,n$; its value then is 
$\alpha_{\opt} = (1-\rho)/{n}$. 
\item[(ii)] If $H$ is not irreducible, then for every $\epsilon > 0$ such that $\rho + \epsilon < 1$ there exists a positive vector of weights $w_\epsilon$ such that the weighted $\ell_1$-norm of the preconditioned residuals $\hat r^k= M^{-1}r^k$ of the iterates $x^k$  satisfies
\begin{equation*} \label{randomized_H_conv_w_reducible:eq}
\mathbb{E}(\|\hat r^k\|_{1,w_\epsilon}) \leq \left(1-\alpha^\epsilon \right)^{k} \| \hat r^0  \|_{1,w} \mbox{ where } \alpha^\epsilon = (1-(\rho+\epsilon))\min_{j=1}^n p_j
\end{equation*}
Moreover, $\alpha^\epsilon$ is maximized if one takes $p_j = 1/n$ for $j=1,\ldots,n$; its value then 
is $\alpha^\epsilon_{\opt} = (1-\rho-\epsilon)/{n}$.
\end{itemize}
\end{theorem}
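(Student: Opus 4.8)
The plan is to derive both parts directly from Theorem~\ref{H_conv:thm}(i) by constructing a positive weight vector for which the weighted column sums $\rho_j$ of $H$ are all (nearly) equal to $\rho = \rho(|H|)$. The crucial observation is that Theorem~\ref{H_conv:thm} is stated for an \emph{arbitrary} positive weight vector $u$ with $\|H\|_{1,u}<1$, and that the quantities $\gamma_j=(1-\rho_j)^{-1}$ entering its bound become constant precisely when all the $\rho_j$ coincide. To force this, I would apply Perron--Frobenius theory not to $H$ itself but to the nonnegative matrix $|H|$, whose weighted column sums control those of $H$ through $\frac{1}{u_j}\sum_i u_i|h_{ij}|$.

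For part (i), since irreducibility depends only on the zero pattern and $|h_{ij}|=0$ exactly when $h_{ij}=0$, the matrix $|H|$ is irreducible whenever $H$ is. Theorem~\ref{Perron_Frobenius:thm} then supplies a positive left Perron vector $w$ with $w^T|H|=\rho\,w^T$. Reading this componentwise gives, for every $j$,
\[
\rho_j=\frac{1}{w_j}\sum_{i=1}^n w_i|h_{ij}|=\frac{(w^T|H|)_j}{w_j}=\rho,
\]
so $\|H\|_{1,w}=\max_j\rho_j=\rho<1$ and $\gamma_j=(1-\rho)^{-1}$ for all $j$. Plugging $u=w$ into Theorem~\ref{H_conv:thm}(i) yields the stated bound with
\[
\alpha^\r=\min_{j=1}^n\frac{p_j}{\gamma_j}=(1-\rho)\min_{j=1}^n p_j=\alpha.
\]
Because $\gamma_j$ is now constant, the optimal probabilities $p_i=\gamma_i/\sum_j\gamma_j$ from Theorem~\ref{H_conv:thm}(i) collapse to $p_i=1/n$, giving $\alpha_\opt=1/\sum_j\gamma_j=(1-\rho)/n$; equivalently, maximizing $(1-\rho)\min_j p_j$ over the probability simplex is plainly achieved at the uniform distribution.

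For part (ii), $|H|$ may be reducible, so I would replace the exact Perron vector by the approximate one from Lemma~\ref{approx_Perron:lem}: given $\epsilon>0$ with $\rho+\epsilon<1$, pick $w_\epsilon>0$ with $w_\epsilon^T|H|\le(\rho+\epsilon)w_\epsilon^T$. Componentwise this gives $\rho_j\le\rho+\epsilon$ for all $j$, hence $\|H\|_{1,w_\epsilon}\le\rho+\epsilon<1$ and $\gamma_j=(1-\rho_j)^{-1}\le(1-(\rho+\epsilon))^{-1}$, i.e.\ $1/\gamma_j\ge 1-(\rho+\epsilon)$. Applying Theorem~\ref{H_conv:thm}(i) with $u=w_\epsilon$ and then using this uniform lower bound on $1/\gamma_j$ gives
\[
\mathbb{E}(\|\hat r^k\|_{1,w_\epsilon})\le(1-\alpha^\r)^k\|\hat r^0\|_{1,w_\epsilon},\quad \alpha^\r=\min_j\frac{p_j}{\gamma_j}\ge(1-(\rho+\epsilon))\min_j p_j=\alpha^\epsilon,
\]
so the bound holds a fortiori with $\alpha^\epsilon$, which is again maximized at $p_j=1/n$.

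The computations here are routine; the real content is conceptual. The main obstacle is recognizing that the correct matrix to analyze is $|H|$ rather than $H$, and that its (left) Perron vector is exactly the weight that equalizes the $\rho_j$ and thereby trivializes the optimization over the $p_j$. This is what makes the result noteworthy, as flagged before the statement: the optimal probabilities turn out to be uniform and hence computable \emph{without} knowing the weight vector $w$ explicitly. A secondary, minor point is that in part (ii) the $\gamma_j$ are only bounded, not constant, so the stated $\alpha^\epsilon$ is merely a uniform lower bound for the genuine $\alpha^\r$ of Theorem~\ref{H_conv:thm}; since a smaller $\alpha$ only weakens the geometric factor $1-\alpha$, the advertised inequality still follows.
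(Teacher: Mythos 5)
Your proposal is correct and follows essentially the same route as the paper: part (i) is obtained by feeding the left Perron vector of $|H|$ into Theorem~\ref{H_conv:thm} so that all $\rho_j$ equal $\rho$, and part (ii) uses the approximate Perron vector from Lemma~\ref{approx_Perron:lem} to get $\rho_j \leq \rho + \epsilon$ and hence the bound a fortiori. Your additional remarks (irreducibility of $|H|$, the collapse of the optimal probabilities to the uniform distribution, and the a fortiori step in (ii)) are all accurate and merely make explicit what the paper leaves implicit.
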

\begin{proof} 
Part (i) follows immediately from Theorem~\ref{H_conv:thm} by taking $w$ as the left Perron vector 
of $|H|$, noting that with this vector we have $\rho_j = \rho$ for $j=1,\ldots,n$. Part (ii) follows from Theorem~\ref{H_conv:thm}, too, now taking $w_\epsilon$ as the vector from Lemma~\ref{approx_Perron:lem}, observing that for this vector we have $\rho_j \leq \rho + \epsilon$ 
for $j=1,\ldots,n$ for the weighted column sums 
$$\rho_j = \frac{1}{(w_\epsilon)_j} \sum_{i=1}^n (w_\epsilon)_i |h_{ij}|. 
$$
\end{proof}

Interestingly, Theorem~\ref{randomized_H_conv_w:thm} cannot be transferred to greedy relaxation, at least not with the techniques used there. Indeed, in order to obtain a bound
$\| \hat r^{k+1}\|_{1,w} \leq (1-\alpha_{\opt})^k \| \hat r^k\|_{1,w}$ when $H$ is irreducible, e.g., the bounds given in Theorem~\ref{H_conv:thm} tell us that we would have to use the greedy pick rule
\[
{w_i}|\hat r^k_i| = \max_{j=1}^n {w_j}|\hat r^k_j|,
\]
which requires the knowledge of $w$.

\section{Randomized Gauss-Seidel and Gauss-Southwell for H-matrices}
Building on Theorem~\ref{H_conv:thm} we now derive convergence results for the randomized Gauss-Seidel and the Gauss-Southwell method when $A$ is an H-matrix.

\begin{definition} \label{H_M:def} (See, e.g., \cite{BermanPlemmons}) 
\begin{itemize}
\item[(i)]
A matrix $A =(a_{ij}) \in \mathR^{n \times n}$ is called an {\em M-matrix} if $a_{ij} \leq 0$ for $i \neq j$ and it is non-singular with $A^{-1} \geq 0$.
\item[(ii)]
A matrix $A \in \mathC^{n \times n}$ is called an {\em H-matrix}, if its comparison matrix $\langle A \rangle$ with
\[
\langle A \rangle_{ij} = \left\{ \begin{array}{rl} |a_{ii}| & \mbox{if $i = j$} \\
                                                 -|a_{ij}| & \mbox{if $i \neq j$}
                                \end{array}
                                \right.
\]
is an M-matrix.
\end{itemize}
\end{definition}

Clearly, an M-matrix is also an H-matrix. For our purposes it is important that 
H-matrices can equivalently be characterized as being generalized diagonally dominant.

\begin{lemma} \label{vectors_for_H:lem}Let $A \in \mathC^{n \times n}$ be an H-matrix. Then
\begin{itemize}
\item[(i)] There exists a positive vector $v\in \mathR^n$  such that $A$ is generalized diagonally dominant by rows, i.e.,
\[
|a_{ii}|v_i > \sum_{j =1, j\neq i}^n |a_{ij}|v_j \mbox{ for } i = 1,\ldots,n.
\]
\item[(ii)] There exists a positive vector $u > 0$ such that $A$ is generalized diagonally dominant by columns, i.e.
\[
u_j |a_{jj}|  > \sum_{i =1, i\neq j}^n u_i |a_{ij}| \mbox{ for } j = 1,\ldots,n.
\]
\end{itemize}
\end{lemma}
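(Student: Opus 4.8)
The plan is to reduce both parts to one standard characterization of nonsingular M-matrices: a $Z$-matrix $M$ (one with $m_{ij}\le 0$ for $i\neq j$) satisfying $M^{-1}\ge 0$ admits a strictly positive vector $v>0$ with $Mv>0$ componentwise. Since by Definition~\ref{H_M:def} the comparison matrix $\langle A \rangle$ is an M-matrix, part~(i) is then immediate: the $i$-th component of the inequality $\langle A \rangle v > 0$ reads $|a_{ii}|v_i - \sum_{j\neq i}|a_{ij}|v_j > 0$, which is exactly the asserted generalized row diagonal dominance.

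First I would establish the auxiliary characterization constructively. Setting $v := \langle A \rangle^{-1}\mathbf{1}$ with $\mathbf{1}=(1,\ldots,1)^T$ gives $\langle A \rangle v = \mathbf{1} > 0$ by construction, so the only thing to verify is that $v$ is strictly, not merely weakly, positive. Nonnegativity is clear from $\langle A \rangle^{-1}\ge 0$; strict positivity holds because a nonsingular matrix has no zero row, so each row of $\langle A \rangle^{-1}$ is nonnegative and nonzero, forcing every entry of $\langle A \rangle^{-1}\mathbf{1}$ to be positive. This one small step is where nonsingularity of the comparison matrix is genuinely used.

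For part~(ii) I would pass to the transpose, exploiting that $\langle A^T \rangle = \langle A \rangle^T$, since the comparison matrix only records absolute values and is insensitive to transposing off-diagonal positions. The transpose of a nonsingular M-matrix is again a nonsingular M-matrix---the $Z$-pattern is preserved and $(M^T)^{-1}=(M^{-1})^T\ge 0$---so $A^T$ is itself an H-matrix. Applying part~(i) to $A^T$ produces a positive $u$ with $|a_{jj}|u_j - \sum_{i\neq j}|a_{ij}|u_i > 0$ for all $j$, which is precisely the column diagonal dominance of~(ii). The only (mild) obstacle throughout is securing the \emph{strict} inequalities; the construction $v=\langle A\rangle^{-1}\mathbf{1}$ forces $\langle A\rangle v = \mathbf{1}$, so the dominance is automatically strict, and the remaining work is bookkeeping.
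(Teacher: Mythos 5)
Your proposal is correct and follows essentially the same route as the paper, which simply takes $v = \langle A\rangle^{-1}e$ for part (i) and $u^T = e^T\langle A\rangle^{-1}$ for part (ii); your transposition argument for (ii) is just a reformulation of the latter, and your explicit verification of strict positivity of $v$ fills in a detail the paper leaves to the reader.
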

\begin{proof} Part (i) can be found in many text books, one can take $v = \langle A  \rangle^{-1}e$, with $e= (1,\ldots,1)^T$.
Part (ii) follows similarly by taking $u^T$ as the row vector $e^T \langle A \rangle^{-1}$.
\end{proof}

The lemma implies the following immediate corollary. 

\begin{corollary} \label{Jacobi_absolute_conv:cor} 
Let $A$ be an H-matrix and let $A = D-B$ be its Jacobi splitting with $D$ the diagonal part of $A$.
Then the iteration matrix $|D^{-1}B|$ belonging to the Jacobi splitting $\langle A \rangle = |D|-|B|$ of
$\langle A \rangle$ satisfies $\|D^{-1}B|\|_{1,u} < 1$ with $u$ the vector from Lemma~\ref{vectors_for_H:lem}(ii).
\end{corollary}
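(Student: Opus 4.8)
The plan is to reduce the claim to the defining property of the column-dominance vector supplied by Lemma~\ref{vectors_for_H:lem}(ii). Writing the Jacobi splitting of the comparison matrix as $\langle A\rangle = |D| - |B|$, I would first observe that the matrix in the statement coincides entrywise with the Jacobi iteration matrix of $\langle A\rangle$: since $B=D-A$ has a zero diagonal, $|D^{-1}B|$ has off-diagonal entries $|a_{ij}|/|a_{ii}|$ and zero diagonal, which is exactly $|D|^{-1}|B|$. Because $A$ is an H-matrix, $\langle A\rangle$ is by definition an M-matrix, so I can work entirely with the nonnegative matrix $J:=|D|^{-1}|B|$ and, in particular, with the positive vector $\tilde u$ from Lemma~\ref{vectors_for_H:lem}(ii), which satisfies the generalized column dominance $\tilde u_j|a_{jj}|>\sum_{i\neq j}\tilde u_i|a_{ij}|$.

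The key algebraic identity I would use is $\langle A\rangle = |D|(I-J)$, equivalently $I-J = |D|^{-1}\langle A\rangle$. The weighted column-sum norm is the quantity $\tfrac{1}{u_j}\sum_i u_i J_{ij}$ maximized over $j$, and showing all of these are $<1$ is the same as showing $u^{T}(I-J)>0$ componentwise. Substituting the identity gives $u^{T}(I-J) = (u^{T}|D|^{-1})\langle A\rangle$. This is where Lemma~\ref{vectors_for_H:lem}(ii) enters: its vector satisfies $\tilde u^{T}\langle A\rangle = e^{T}>0$ (it is constructed as $e^{T}\langle A\rangle^{-1}$), so if I choose the weight so that $u^{T}|D|^{-1}=\tilde u^{T}$, i.e. $u_i=\tilde u_i|a_{ii}|$, then $u^{T}(I-J)=e^{T}>0$ and every weighted column sum is strictly below $1$.

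I expect the main obstacle to be precisely this scaling step. The per-row division by $|a_{ii}|$ inside $J_{ij}=|a_{ij}|/|a_{ii}|$ attaches the diagonal factor to the row index $i$, whereas column dominance attaches it to the column index $j$; the two match only after rescaling the weight. Concretely, I would note that the bare Lemma~\ref{vectors_for_H:lem}(ii) vector directly controls $\|{|B|\,|D|^{-1}}\|_{1,\tilde u}$ (right-scaled Jacobi) rather than $\||D|^{-1}|B|\|_{1,\tilde u}$, and that the correct weight for the column-sum norm of $|D^{-1}B|$ is therefore $u=|D|\tilde u$. With that weight in hand the bound $\||D^{-1}B|\|_{1,u}=\max_j\tfrac{1}{u_j}\sum_{i\neq j}u_i\tfrac{|a_{ij}|}{|a_{ii}|}=\max_j\tfrac{1}{\tilde u_j|a_{jj}|}\sum_{i\neq j}\tilde u_i|a_{ij}|<1$ follows immediately from column dominance, furnishing exactly the hypothesis $\|H\|_{1,u}<1$ needed to invoke Theorem~\ref{H_conv:thm}.
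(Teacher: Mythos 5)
Your argument is correct, and it is essentially the argument the paper intends: the corollary is presented there with no separate proof, as an ``immediate'' consequence of Lemma~\ref{vectors_for_H:lem}(ii). What you have flagged as the ``main obstacle'' is, however, a genuine discrepancy in the paper's formulation rather than just a step to be smoothed over. With $u$ taken literally as the column-dominance vector of Lemma~\ref{vectors_for_H:lem}(ii), the weighted column sums of $|D^{-1}B|$ are $\tfrac{1}{u_j}\sum_{i\neq j}u_i\,|a_{ij}|/|a_{ii}|$, with the diagonal divisor attached to the \emph{row} index, and generalized column dominance does not bound these by $1$. A $2\times2$ example makes this concrete: take $a_{11}=1$, $a_{22}=10$, $a_{12}=a_{21}=-1/2$, so that $A=\langle A\rangle$ is an M-matrix and $u^{T}=e^{T}\langle A\rangle^{-1}$ is proportional to $(10.5,\,1.5)$; then the second weighted column sum of $|D^{-1}B|$ is $\tfrac{u_1}{u_2}\cdot\tfrac12=3.5>1$, even though $u$ satisfies column dominance. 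Your rescaled weight $|D|u$ is the right one: $\bigl\||D^{-1}B|\bigr\|_{1,|D|u}=\max_j\tfrac{1}{u_j|a_{jj}|}\sum_{i\neq j}u_i|a_{ij}|<1$ is exactly the statement of Lemma~\ref{vectors_for_H:lem}(ii). The same silent identification, $\tfrac{1}{u_j}\sum_i|h_{ij}|u_i=\tfrac{1}{u_j|a_{jj}|}\sum_{i\neq j}|a_{ij}|u_i$, appears in the proof of Theorem~\ref{GS_H_conv:thm}(i) and is valid only when all $|a_{ii}|$ coincide; the downstream results survive verbatim once the weight for the preconditioned residual is taken to be $|D|u$ (which, via $\hat r^k=D^{-1}r^k$, converts the bound into one for $\|r^k\|_{1,u}$, consistent with Theorem~\ref{GS_H_conv2:thm}, whose own displayed computation already produces the correctly scaled quantity $\tfrac{1}{u_i|a_{ii}|}\sum_{\ell\neq i}u_\ell|a_{\ell i}|$). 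So your proof is not merely correct; it repairs the statement.
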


With these preparations we easily obtain the following first theorem on the (unrelaxed) randomized Gauss-Seidel and Gauss-Southwell methods. We formulate it using the residuals $b-Ax^k$ of the original equation.

\begin{theorem} \label{GS_H_conv:thm} Let $A$ be an H-matrix and let $u$ be a positive vector such that $u^T\langle A \rangle > 0$. Let $A = D-B$ be the Jacobi splitting of $A$ and put $H = D^{-1}B$. Moreover, let $w=(w_,\ldots,w_n)$ with $w_j = u_j / |a_{jj}|, j=1,\ldots,n$. Then 
\begin{itemize} 
\item[(i)] All weighted column sums 
\begin{equation} \label{rhoj_def:eq}
   \rho_j = \frac{1}{u_j}\sum_{i=1}^n |h_{ij}| u_i
   \end{equation} 
 are less than 1. 
\item[(ii)] In the randomized Gauss-Seidel method, i.e.\ Algorithm~\ref{randomized_linear:alg} for $H=D^{-1}B$, the expected values of the $w$-weighted $\ell_1$-norm $\|r^k\|_{1,w}$ of the original residuals satisfy
\[
\mathbb{E}(\|r^k\|_{1,w}) \leq \left(1-\alpha^\rGS \right)^{k} \| r^0  \|_{1,w},
\]
where $\alpha^\rGS =  \min_{j=1}^n (p_j/\gamma_j) >0$ and $\gamma_j = (1-\rho_j)^{-1}$ for $j=1,\ldots,n$. The value of $\alpha^\rGS$ is maximized for the choice $p_i = \gamma_i / \sum_{\ell=1}^n \gamma_\ell$, and the resulting value for $\alpha^\rGS$ is $\alpha_{\opt} = 1/\sum_{\ell=1}^n \gamma_\ell$. 
\item[(iii)] In the Gauss-Southwell method (Algorithm~\ref{greedy_linear:alg} with $H = D^{-1}B$), using the greedy pick rule~\eqref{prec_pick:eq} based on the preconditioned residual, the $w$-weighted $\ell_1$-norm of the original residuals of the iterates $x^k$ satisfy
\begin{equation*} 
 \| r^k\|_{1,w} \leq \left(1-\alpha^\g \right)^{k} \|  r^0  \|_{1,w},
\end{equation*}
with
\[
\alpha^\g =  \min_{j=1}^n \frac{\pi_j}{\gamma_j}, \mbox{ where } \pi_j = \frac{u_j/\beta_j}{\sum_{\ell=1}^n u_\ell/\beta_\ell}.
\]
Moreover, $\alpha^\g$ is maximized if we take 
\begin{equation*} 
\beta_j = u_j/\gamma_j,  \enspace j=1,\ldots,n; 
\end{equation*}
its maximal value is then $\alpha_\opt$ from (i).
\end{itemize}
\end{theorem}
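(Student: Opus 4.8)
The plan is to reduce everything to Theorem~\ref{H_conv:thm}, whose hypothesis is $\|H\|_{1,u} < 1$ with $H = D^{-1}B$, i.e.\ $\max_j \rho_j < 1$ for the weighted column sums $\rho_j$ defined in \eqref{rhoj_def:eq}. The key observation is that $|H| = |D^{-1}B| = |D|^{-1}|B|$ is exactly the Jacobi iteration matrix of the comparison matrix $\langle A\rangle = |D| - |B|$, so Corollary~\ref{Jacobi_absolute_conv:cor} supplies a positive vector $u$ (from Lemma~\ref{vectors_for_H:lem}(ii)) for which $\||D^{-1}B|\|_{1,u} < 1$. Spelling this out, the weighted column sum satisfies
\[
\rho_j = \frac{1}{u_j}\sum_{i=1}^n u_i |h_{ij}|
       = \frac{1}{u_j|a_{jj}|}\sum_{i=1, i\neq j}^n u_i |a_{ij}| < 1,
\]
where the last inequality is precisely the generalized column diagonal dominance $u_j|a_{jj}| > \sum_{i\neq j} u_i|a_{ij}|$ of Lemma~\ref{vectors_for_H:lem}(ii). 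This establishes part (i) and verifies that Theorem~\ref{H_conv:thm} applies with this $u$.

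Next I would handle the bookkeeping between the preconditioned residual $\hat r^k = M^{-1}r^k$ used in Theorem~\ref{H_conv:thm} and the original residual $r^k = b - Ax^k$ appearing in the present statement. Here the splitting is Jacobi, $M = D$, so $\hat r^k = D^{-1} r^k$, and consequently for any index $j$,
\[
u_j |\hat r_j^k| = u_j \frac{|r_j^k|}{|a_{jj}|} = \frac{u_j}{|a_{jj}|}\,|r_j^k| = w_j |r_j^k|,
\]
by the definition $w_j = u_j/|a_{jj}|$. Summing over $j$ gives $\|\hat r^k\|_{1,u} = \|r^k\|_{1,w}$, so the $u$-weighted $\ell_1$-norm of the preconditioned residual is identical to the $w$-weighted $\ell_1$-norm of the original residual. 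The contraction bound \eqref{randomized_H_conv:eq} of Theorem~\ref{H_conv:thm}(i) then transfers verbatim to give part (ii), with the same $\alpha^\rGS = \min_j p_j/\gamma_j$, the same optimal probability choice $p_i = \gamma_i/\sum_\ell \gamma_\ell$, and the same optimal value $\alpha_{\opt} = 1/\sum_\ell \gamma_\ell$. Part (iii) follows in exactly the same way from Theorem~\ref{H_conv:thm}(ii): the greedy pick rule \eqref{prec_pick:eq} and the definition of $\alpha^\g$, $\pi_j$, and the optimal $\beta_j = u_j/\gamma_j$ are all stated in terms of $u$ and $\hat r^k$, and the norm identity $\|\hat r^k\|_{1,u} = \|r^k\|_{1,w}$ converts the resulting bound \eqref{greedy_H_conv:eq} into the claimed bound on $\|r^k\|_{1,w}$.

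I do not expect a genuine obstacle, since this theorem is essentially a specialization of Theorem~\ref{H_conv:thm} to the Jacobi splitting of an H-matrix. The only point requiring care is making the translation between the two weight vectors explicit and checking that the weighted-column-sum condition coming from the comparison matrix really matches the hypothesis of Theorem~\ref{H_conv:thm}; the identity $|D^{-1}B| = |D|^{-1}|B|$ (valid because $D$ is diagonal) is what makes Corollary~\ref{Jacobi_absolute_conv:cor} directly applicable. Once the norm identity $\|\hat r^k\|_{1,u} = \|r^k\|_{1,w}$ is in hand, every quantitative statement — the contraction factors, the optimizers, and the optimal values — is inherited unchanged from the abstract theorem.
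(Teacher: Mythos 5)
Your proposal is correct and follows essentially the same route as the paper: part (i) is verified by the identical computation $\rho_j = \frac{1}{u_j|a_{jj}|}\sum_{i\neq j}u_i|a_{ij}| < 1$ from the hypothesis $u^T\langle A\rangle > 0$, and parts (ii) and (iii) are obtained by invoking Theorem~\ref{H_conv:thm} together with the norm identity $\|\hat r^k\|_{1,u} = \|r^k\|_{1,w}$ for $\hat r^k = D^{-1}r^k$. The only cosmetic difference is that you route the diagonal dominance through Corollary~\ref{Jacobi_absolute_conv:cor}, whereas the paper reads it directly off the assumption on $u$; this changes nothing of substance.
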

\begin{proof}
For (i) observe that we have $|a_{jj}|u_j > \sum_{i=1, i\neq j}^n |a_{ij}| u_i$ for $j=1,\ldots,n$ and thus, since $h_{ij} = a_{ij}/a_{ii}$ for $i \neq j$ and $h_{jj} = 0$,
\[
\rho_j = \frac{1}{u_j} \sum_{i=1}^n |h_{ij}| u_i = \frac{1}{u_j|a_{jj}|} \sum_{i=1, i \neq j}^n |a_{ij}| u_i < 1.
\]
Parts (ii) and (iii) now follow directly from Theorem~\ref{H_conv:thm}, observing that for the   residual
$\hat r^k = D^{-1}r^k$ we have $\| \hat r^k \|_{1,u} = \| r^k \|_{1,w}$.
\end{proof}

Note that for Gauss-Southwell the greedy pick rule \eqref{prec_pick:eq} with weights $\beta_i$ based on the preconditioned residual is equivalent to the greedy pick rule \eqref{weighted_pick:eq} based on the original residual with weights $\beta_i/a_{ii}$.

Instead of changing the weights from $u$ to $w$, it is also possible to obtain a bound for the $u$-weighted 
$\ell_1$-norm, where, in addition, the same optimal choice for the $p_j$ in the randomized Gauss-Seidel iteration 
yields the same $\alpha_{\opt}$ as that of Theorem~\ref{GS_H_conv:thm}, and similarly for the Gauss-Southwell iteration.

\begin{theorem} \label{GS_H_conv2:thm} Let $A$ be an H-matrix and let $u$ be a positive vector such that $u^T\langle A \rangle > 0$. Then 
\begin{itemize} 
\item[(i)] In the randomized Gauss-Seidel method the expected values of the $u$-weighted $\ell_1$-norm $\|r^k\|_{1,u}$ of the residuals satisfy
\[
\mathbb{E}(\|r^k\|_{1,u}) \leq \left(1-\alpha \right)^{k} \| r^0  \|_{1,u},
\]
where $\alpha =  \min_{j=1}^n ({p_j}/{\gamma_j}) >0$ and $\gamma_j = (1-\rho_j)^{-1}$, $\rho_j$ from \eqref{rhoj_def:eq}, for $j=1,\ldots,n$. The value of $\alpha$ is maximized for the choice 
\begin{equation} \label{opt_prob_u:eq}
p_i = \gamma_i / \sum_{\ell=1}^n \gamma_\ell,
\end{equation}
and the resulting value for $\alpha$ is $\alpha_{\opt} = 1/\sum_{\ell=1}^n \gamma_\ell$. 
\item[(ii)] In the Gauss-Southwell method, if we take the same greedy pick as in Theorem~\ref{GS_H_conv:thm}, i.e.,  
\begin{equation} \label{greedy_pick2:eq}
{(1-\rho_i)}\frac{u_i}{|a_{ii}|}{|r^k_i|} = \max_{j=1}^n {(1-\rho_j)}\frac{u_j}{|a_{jj}|}{|r^k_j|},
\end{equation}
then
\[
   \|r^k\|_{1,u} \leq \left(1- \alpha_{\opt} \right)^{k} \| r^0  \|_{1,u}.
\]
\end{itemize}
\end{theorem}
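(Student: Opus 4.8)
The plan is to reduce both parts to Theorem~\ref{H_conv:thm} by a diagonal rescaling of the weight vector, which lets us pass from the preconditioned residual $\hat r^k = D^{-1}r^k$ back to the original residual $r^k$. The device is the rescaling $\tilde u_i = u_i|a_{ii}|$, for which
\[
\|r^k\|_{1,u} = \sum_{j=1}^n u_j|r^k_j| = \sum_{j=1}^n \tilde u_j\,\frac{|r^k_j|}{|a_{jj}|} = \sum_{j=1}^n \tilde u_j\,|\hat r^k_j| = \|\hat r^k\|_{1,\tilde u}.
\]
Thus measuring the original residual in the $u$-norm is the same as measuring the preconditioned residual in the $\tilde u$-norm, and the whole apparatus of Theorem~\ref{H_conv:thm} applies to $\hat r^k$ with base weight $\tilde u$.

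First I would check that $\tilde u$ is admissible, i.e. that the $\tilde u$-weighted column sums of $H=D^{-1}B$ are all $<1$. Writing $\tilde\rho_j = \tilde u_j^{-1}\sum_i |h_{ij}|\tilde u_i$ and using $|h_{ij}| = |a_{ij}|/|a_{ii}|$, the factors $|a_{ii}|$ cancel and one finds
\[
\tilde\rho_j = \frac{1}{u_j|a_{jj}|}\sum_{i\neq j} u_i|a_{ij}|,
\]
which is exactly the quantity $\rho_j$ of \eqref{rhoj_def:eq} and is $<1$ precisely by the column diagonal dominance $u^T\langle A\rangle>0$. The crucial point is that this rescaled column sum collapses back to the \emph{original} $\rho_j$, so the associated $\gamma_j=(1-\rho_j)^{-1}$, and hence $\alpha_{\opt}=1/\sum_\ell\gamma_\ell$, are unchanged. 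Granting this, part (i) is immediate: Theorem~\ref{H_conv:thm}(i) applied to $\hat r^k$ with weight $\tilde u$ gives $\Ex(\|\hat r^k\|_{1,\tilde u})\le(1-\alpha)^k\|\hat r^0\|_{1,\tilde u}$ with $\alpha=\min_j p_j/\gamma_j$, and the norm identity translates this into the stated bound on $\Ex(\|r^k\|_{1,u})$; the optimization over the $p_j$ is the same convex-combination argument (Lemma~\ref{convex_comb:lem}), producing \eqref{opt_prob_u:eq} and $\alpha_{\opt}=1/\sum_\ell\gamma_\ell$.

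It is instructive to see the same conclusion directly, which is also how I would present part (ii). When component $i$ is relaxed, $r^{k+1}=r^k-(r^k_i/a_{ii})Ae_i$ annihilates the $i$-th residual entry, and the triangle inequality together with $\sum_{\ell\neq i}u_\ell|a_{\ell i}| = \rho_i\,u_i|a_{ii}|$ yields the basic decrease
\[
\|r^{k+1}\|_{1,u} \leq \|r^k\|_{1,u} - (1-\rho_i)\,u_i\,|r^k_i|.
\]
Taking conditional expectations and bounding $\sum_i (p_i/\gamma_i)\,u_i|r^k_i| \ge (\min_j p_j/\gamma_j)\,\|r^k\|_{1,u}$ re-proves (i). For (ii) one selects the greedy index so as to maximize the per-relaxation reduction $(1-\rho_i)u_i|r^k_i| = (u_i/\gamma_i)|r^k_i|$; Lemma~\ref{convex_comb:lem}, applied with the lemma's $a_j=1/\gamma_j$ and $\gamma_j=u_j|r^k_j|$, then gives $\max_j (u_j/\gamma_j)|r^k_j| \ge \|r^k\|_{1,u}/\sum_\ell\gamma_\ell = \alpha_{\opt}\|r^k\|_{1,u}$, so that $\|r^{k+1}\|_{1,u}\le(1-\alpha_{\opt})\|r^k\|_{1,u}$ and \eqref{greedy_pick2:eq} delivers the advertised rate.

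The main obstacle is not analytic but bookkeeping that is genuinely delicate: keeping straight which residual ($r^k$ or $\hat r^k$) and which weight ($u$, $w$, or $\tilde u$) each expression refers to, and in particular verifying the cancellation that makes $\tilde\rho_j$ coincide with the original $\rho_j$. The most error-prone spot is the greedy pick, because the selection criterion is naturally phrased through the preconditioned residual $\hat r^k$ (where $|\hat r^k_i|=|r^k_i|/|a_{ii}|$), whereas the reduction it must control lives in the original residual. One must ensure that the chosen index maximizes $(1-\rho_i)u_i|r^k_i|$ itself, equivalently the $u_j/\gamma_j$-weighted criterion on $r^k$, so that Lemma~\ref{convex_comb:lem} yields exactly $\alpha_{\opt}$ and not a weaker constant carrying stray $|a_{jj}|$ factors.
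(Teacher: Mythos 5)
Your part (i) is correct and is in substance the paper's own proof: the paper derives exactly your ``basic decrease'' $\|r^{k+1}\|_{1,u}\le\|r^k\|_{1,u}-(1-\rho_i)u_i|r_i^k|$ directly from $r^{k+1}=r^k-(r_i^k/a_{ii})Ae_i$ and then invokes the argument of Theorem~\ref{H_conv:thm}; your rescaling $\tilde u_i=u_i|a_{ii}|$ with $\|r^k\|_{1,u}=\|\hat r^k\|_{1,\tilde u}$ is an equivalent repackaging of the same step. One caveat: since $H=D^{-1}B$ has $|h_{ij}|=|a_{ij}|/|a_{ii}|$, the literal column sum in \eqref{rhoj_def:eq} is $\frac{1}{u_j}\sum_{i\ne j}u_i|a_{ij}|/|a_{ii}|$, which is \emph{not} the quantity $\tilde\rho_j=\frac{1}{u_j|a_{jj}|}\sum_{i\ne j}u_i|a_{ij}|$ that your computation (correctly) produces, unless the $|a_{ii}|$ all coincide. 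The paper makes the same silent identification in the proof of Theorem~\ref{GS_H_conv:thm}(i), so you inherit rather than create this ambiguity, but the ``cancellation'' you describe as the crucial point does not literally occur; what is true is that $\tilde\rho_j$ is the quantity that actually governs the decrease of $\|r^k\|_{1,u}$, and your direct derivation uses it consistently.

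The genuine gap is in part (ii). You prove the contraction for the greedy index maximizing $(1-\rho_j)u_j|r_j^k|$, and your closing paragraph correctly identifies that this is the pick Lemma~\ref{convex_comb:lem} needs in order to produce the lower bound $\alpha_{\opt}\|r^k\|_{1,u}$ on the per-step reduction. But the statement prescribes the pick \eqref{greedy_pick2:eq}, which maximizes $(1-\rho_j)\frac{u_j}{|a_{jj}|}|r_j^k|$ --- a different index whenever the $|a_{jj}|$ are not all equal in modulus. With that pick one obtains $(1-\rho_i)\frac{u_i}{|a_{ii}|}|r_i^k|\ge\alpha_{\opt}\|r^k\|_{1,w}$, i.e.\ the $w$-norm contraction of Theorem~\ref{GS_H_conv:thm}(iii), and multiplying back by $|a_{ii}|$ does not convert this into $(1-\rho_i)u_i|r_i^k|\ge\alpha_{\opt}\|r^k\|_{1,u}$. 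So what you have proven is a corrected variant of (ii), not (ii) as stated; the paper's own one-line justification (``follows exactly in the same manner'') glosses over precisely this point. You should either say explicitly that you replace \eqref{greedy_pick2:eq} by the pick $\max_j(u_j/\gamma_j)|r_j^k|$ on the original residual, or supply an additional argument (which I do not see) that \eqref{greedy_pick2:eq} still yields the stated $u$-norm bound.
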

\begin{proof}
If $i$ is the index chosen in iteration $k$ in randomized Gauss-Seidel or Gauss-Southwell, we have from \eqref{JOR_residual_formulation:eq}
\[
r^{k+1} = b-Ax^{k+1} = r^k - \frac{r^k_i}{a_{ii}} Ae_i.
\]
This gives
\begin{eqnarray*}
\| r^{k+1}\|_{1,u} &=& \sum_{\ell=1,\ell \neq i}^n {u_\ell} \left| r_\ell^{k} -  \frac{r^k_i}{a_{ii}} a_{\ell i}\right|  \nonumber \\
&\leq&  \sum_{\ell=1,\ell \neq i}^n {u_\ell} | r_\ell^{k}| + \frac{| r^k_i|}{|a_{ii}|} \sum_{\ell=1, \ell \neq i}^n {u_\ell}|a_{\ell i}| \nonumber \\
&=&  \sum_{\ell=1,\ell \neq i}^n {u_\ell} | r_\ell^{k}| + {u_i} |r^k_i| \frac{1}{u_i|a_{ii}|} \sum_{\ell=1,\ell \neq i}^n {u_\ell}|a_{\ell i}| \nonumber \\
&=&  \| r^{k}\|_{1,u} - (1-\rho_i) {{u_i}}| r^k_i|. 
\end{eqnarray*}
This is exactly the same relation as \eqref{basic_inequality:eq}, but now for the original residuals rather than the preconditioned ones. Parts (i) and (ii) therefore follow exactly in the same manner as in the proof of Theorem~\ref{H_conv:thm}.
\end{proof}

Let us mention that, if $A$ and thus $|H|=|D^{-1}B|$ is irreducible, the left Perron vector $u$ of $|H|$ is a vector with $u^T\langle A \rangle > 0$. As was 
discussed after Theorem~\ref{H_conv:thm} this means that with respect to the weigths from this vector we know the 
optimal probabilities in randomized Gauss-Seidel to be $p_i = 1/n, i=1,\ldots,n$, i.e., we do not need to know $u$ 
explicitly. According to Lemma~\ref{approx_Perron:lem}, a similar observation holds in an approximate sense with arbitrary 
precision when $A$ is not irreducible. 

{We also remark that the preconditioned residual $\hat r^k = D^{-1}r^k$ satisfies $\|\hat r^k\|_{1,u} = \| r^k \|_{1,w}$ with $u,w$ from Theorems~\ref{GS_H_conv:thm} and \ref{GS_H_conv2:thm}. So with these two theorems we 
have obtained identical convergence bounds for the $u$-weighted $\ell_1$-norm of the unpreconditioned and the preconditioned residuals.}

Theorem~\ref{GS_H_conv:thm} can be extended to relaxed randomized Gauss-Seidel iterations. We formulate the results only for the case where the weight vector is the left Perron vector. While this is not mandatory as long as the relaxation parameter $\omega$ satisfies $\omega \in (0,1]$, it is crucial for the part which extends the range of $\omega$ to values larger than~1. 

\begin{theorem} \label{relaxed_rGS_conv_th} Let $A$ be an H-matrix and let $A = D-B$ be its Jacobi splitting.  Put $H = D^{-1}B$ and $\rho = \rho(|H|) <1$. Assume that $\omega \in (0,\tfrac{2}{1+\rho})$ and define 
\linebreak
$\rho_\omega = \omega \rho + |1-\omega| \in (0,1)$. Consider the relaxed randomized Gauss-Seidel iteration, i.e., 
Algorithm~\ref{randomized_linear:alg} with the matrix $H_\omega$ from \eqref{JOR_matrix:eq}. Then 
\begin{itemize}
\item[(i)]
If $A$ and thus $|H|$ is irreducible, then with $u$ the left Perron vector of $|H|$ and $w$ the positive vector with components $w_i = u_i / |a_{ii}|$, the expected values for the $w$-weighted $\ell_1$-norm of the residuals satisfy
\[
\mathbb{E}(\|r^k\|_{1,w}) \leq \left(1- \alpha_\omega \right)^{k} \| r^0  \|_{1,w},
\]
where $\alpha_\omega =  \min_{j=1}^n p_j/\gamma_\omega >0$, $\gamma_\omega = (1-\rho_\omega)^{-1}$. The value of $\alpha_\omega$ is maximized for the choice $p_i = {1}/{n}$, and the resulting value for 
$\alpha_\omega$ is $\alpha_\omega^{\opt} = (1-\rho_\omega)/{n}$.
\item[(ii)] If $A$ is not irreducible, then for every $\epsilon > 0$ such that $\rho_\omega + \epsilon < 1$ 
there exists a positive vector $w_\epsilon$ such that the expected values for the weighted $\ell_1$-norm of the residuals satisfy
\[
\mathbb{E}(\|r^k\|_{1,w_\epsilon}) \leq \left(1- \alpha_\omega(\epsilon) \right)^{k} \| r^0  \|_{1,w_\epsilon},
\]
where $\alpha_\omega(\epsilon) =  \min_{j=1}^n p_j/\gamma_\omega(\epsilon) >0$, $\gamma_\omega(\epsilon) = (1-\rho_\omega+\epsilon)^{-1}$. The value of $\alpha_\omega(\epsilon)$ is maximized for the choice $p_i = {1}/{n}$, and 
the resulting value for $\alpha$ is $\alpha_\omega^{\opt}(\epsilon) = (1-\rho_\omega + \epsilon)/{n}$. 
\end{itemize}
\end{theorem}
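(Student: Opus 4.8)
The plan is to reduce everything to Theorem~\ref{H_conv:thm} applied to the relaxed iteration matrix $H_\omega = (1-\omega)I + \omega H$, taking the left Perron vector $u$ of $|H|$ as the weight vector. First I would form $|H_\omega|$ entrywise: since $H = D^{-1}B$ has zero diagonal, the diagonal entries of $H_\omega$ are $1-\omega$ and the off-diagonal entries are $\omega h_{ij}$, so $|H_\omega|$ has diagonal $|1-\omega|$ and off-diagonals $\omega|h_{ij}|$. I would then compute the $u$-weighted column sums of $|H_\omega|$,
\[
\rho_j^\omega = \frac{1}{u_j}\sum_{i=1}^n u_i\,|(H_\omega)_{ij}| = |1-\omega| + \frac{\omega}{u_j}\sum_{i\neq j} u_i\,|h_{ij}|.
\]
The decisive point is that, because $u$ is the Perron vector of $|H|$ and $h_{jj}=0$, the inner sum equals $\sum_i u_i|h_{ij}| = \rho\,u_j$ \emph{exactly}, so $\rho_j^\omega = |1-\omega| + \omega\rho = \rho_\omega$ for every $j$; all weighted column sums collapse to the single value $\rho_\omega$.

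Next I would verify that the stated range $\omega\in(0,\tfrac{2}{1+\rho})$ is precisely the range for which $\rho_\omega<1$: for $\omega\le 1$ one has $\rho_\omega = 1-\omega(1-\rho)<1$ automatically, while for $\omega>1$ one has $\rho_\omega = \omega(1+\rho)-1$, which is $<1$ exactly when $\omega<\tfrac{2}{1+\rho}$. With $\rho_\omega<1$ in hand, the hypothesis $\|H_\omega\|_{1,u}=\max_j\rho_j^\omega=\rho_\omega<1$ of Theorem~\ref{H_conv:thm}(i) holds, so I can invoke that theorem directly. Because all $\rho_j^\omega$ coincide, the quantities $\gamma_j=(1-\rho_j^\omega)^{-1}$ all equal $\gamma_\omega=(1-\rho_\omega)^{-1}$, whence $\alpha^{\r}=\min_j p_j/\gamma_\omega=(1-\rho_\omega)\min_j p_j$; maximizing $\min_j p_j$ under $\sum_j p_j=1$ forces $p_j=1/n$ and gives $\alpha_\omega^{\opt}=(1-\rho_\omega)/n$. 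Finally I would translate from the preconditioned residual back to the original one: for the relaxed splitting $M=\tfrac{1}{\omega}D$, so $\hat r^k = \omega D^{-1}r^k$ and $\|\hat r^k\|_{1,u}=\omega\|r^k\|_{1,w}$ with $w_i=u_i/|a_{ii}|$; the factor $\omega$ cancels in the ratio, and the bound of Theorem~\ref{H_conv:thm}(i) becomes the claimed bound on $\mathbb{E}(\|r^k\|_{1,w})$. This proves part (i).

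For part (ii), when $|H|$ is reducible the Perron vector may fail to be positive, so I would replace it by the vector $w_\epsilon>0$ from Lemma~\ref{approx_Perron:lem}, which satisfies $w_\epsilon^T|H|\le(\rho+\epsilon)w_\epsilon^T$. The same computation then gives $\rho_j^\omega\le|1-\omega|+\omega(\rho+\epsilon)=\rho_\omega+\omega\epsilon$ for all $j$, and (shrinking $\epsilon$ so that $\rho_\omega+\omega\epsilon<1$) the argument of part (i) goes through with $\rho_\omega$ replaced by $\rho_\omega+\omega\epsilon$, yielding the stated $\epsilon$-perturbed bound.

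I expect the main obstacle to be the bookkeeping around $\omega>1$: there the diagonal of $H_\omega$ is negative and contributes $|1-\omega|=\omega-1$ to each column sum, and it is exactly the Perron vector that equalizes every column sum to $\rho_\omega$ and thereby produces the \emph{sharp} threshold $\omega<\tfrac{2}{1+\rho}$. With an arbitrary weight vector one could only control $\max_j\rho_j^\omega$ through $\|H\|_{1,u}\ge\rho$, which would force a strictly more restrictive range of $\omega$ and non-uniform $\gamma_j$; this is why the theorem is deliberately phrased in terms of the Perron vector and why the uniform choice $p_j=1/n$ turns out to be optimal.
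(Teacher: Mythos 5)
Your proposal is correct and follows essentially the same route as the paper: take the left Perron vector $u$ of $|H|$ (or its $\epsilon$-approximation from Lemma~\ref{approx_Perron:lem} in the reducible case) so that all $u$-weighted column sums of $H_\omega$ collapse to the single value $\rho_\omega=\omega\rho+|1-\omega|<1$, then invoke Theorem~\ref{H_conv:thm} and pass from the preconditioned residual $\hat r^k=\omega D^{-1}r^k$ to $r^k$ via the weights $w_i=u_i/|a_{ii}|$. Your explicit tracking of the factor $\omega$ in $\hat r^k$ (which cancels in the ratio) and your case analysis of $\rho_\omega<1$ for $\omega\lessgtr 1$ are slightly more detailed than the paper's, but the argument is the same.
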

\begin{proof}
We indeed have $\rho < 1$ since by Corollary~\ref{Jacobi_absolute_conv:cor} the operator norm  $\| | H|  \|_{1,u}$
is less than 1 for some vector $u > 0$. 
Assume first that $H$ is irreducible and let $u>0$ be the left Perron vector of $|H|$. Then we have for all $j=1,\ldots,n$
\[
 \sum_{i=1, i\neq j}^n |h_{ij}|{u}_i = \rho {u}_j,
 \]
 which for the weighted column sums of the matrix $H_\omega = (1-\omega)I + \omega H$ belonging to the relaxed iteration gives
 \[
 \frac{1}{u_j} \left( \sum_{i=1, i \neq j} \omega |h_{ij}|u_i +|1-\omega|u_j\right)  = \omega \rho + |1-\omega| = \rho_\omega.
 \]
 Since $\rho < 1$ we have that $\rho_\omega < 1$ for $\omega \in (0,\tfrac{2}{1+\rho})$. The result now follows applying Theorem~\ref{H_conv:thm} with the 
weight vector ${u}$, using the facts that all weighted column sums $\rho_j$ 
 are now equal to $\rho_\omega$ and that  $|\hat r^k_i| = \tfrac{1}{|a_{ii}|}
 | r^k_i|$, which gives the weights $w_i = u_i / |a_{ii}|$ in (i).
 
 If $H$ is not irreducible, the proof proceeds in exactly the same manner, chosing $u_\epsilon > 0$ as a vector for which $|H| u_\epsilon \leq (\rho+\epsilon) u_\epsilon$ ; see the discussion after Theorem~\ref{Perron_Frobenius:thm}. 
 \end{proof} 
 
For the same reasons as those explained after Theorem~\ref{randomized_H_conv_w:thm}, it is not possible to 
expand Theorem~\ref{relaxed_rGS_conv_th} to Gauss-Southwell unless we we know the Perron vector $u$ and include it into the greedy pick rule. We do not state this as a separate theorem. 
 
\subsection*{Numerical example}
We consider again the implicit Euler rule for the 
\linebreak
convection-diffusion equation \eqref{advec_diff:eq}, now with a non-vanishing and non-constant velocity field describing a re-circulating flow,
\[
(\nu(x,y), \mu(x,y))  = \sigma (\, 4x(x-1)(1-2y), -4y(y-1)(1-2x)\, ).
\]
We will consider the two choice $\sigma = 1$ (weak convection) and $\sigma = 400$ (strong convection); the diffusion coefficients $\alpha$ and $\beta$ are constant and equal to 1. With $N = 100$ and $\tau = 0.5h^2, h = \tfrac{1}{N+1}$, 
as in the example in section~\ref{hpd:sec}, the matrix $A=I+\tfrac{\tau}{2}B$ from \eqref{the_system:eq} is diagonally dominant for both $\sigma = 1$ and $\sigma = 400$. So we take the weight vector $u$ to have all components equal to 1 and we report results on the $\ell_1$-norm of the residuals for randomized Gauss-Seidel and Gauss-Southwell as an illustration of Theorem~\ref{GS_H_conv2:thm}. Since this time we are interested in the residuals rather than in the errors, it is not mandatory to fix the right hand side such that we know the solution, but to stay in line with the earlier experiments we actually did so with the solution being again the discretization of $xy(1-x)(1-y)$.

\af{The top row of} Figure~\ref{H:fig} displays, as before, these $\ell_1$-norms only after every $n$ relaxations, considered as one iteration. For both values of $\sigma$ we take the probabilities $p_i$ from \eqref{opt_prob_u:eq} in randomized Gauss-Seidel and the greedy pick rule \eqref{greedy_pick2:eq} for Gauss-Southwell, so that Theorem~\ref{GS_H_conv2:thm} applies, and the plots also report the bounds for the 1-norm of the residual given in that theorem.

\begin{figure}
\centerline{\includegraphics[width=0.5\textwidth]{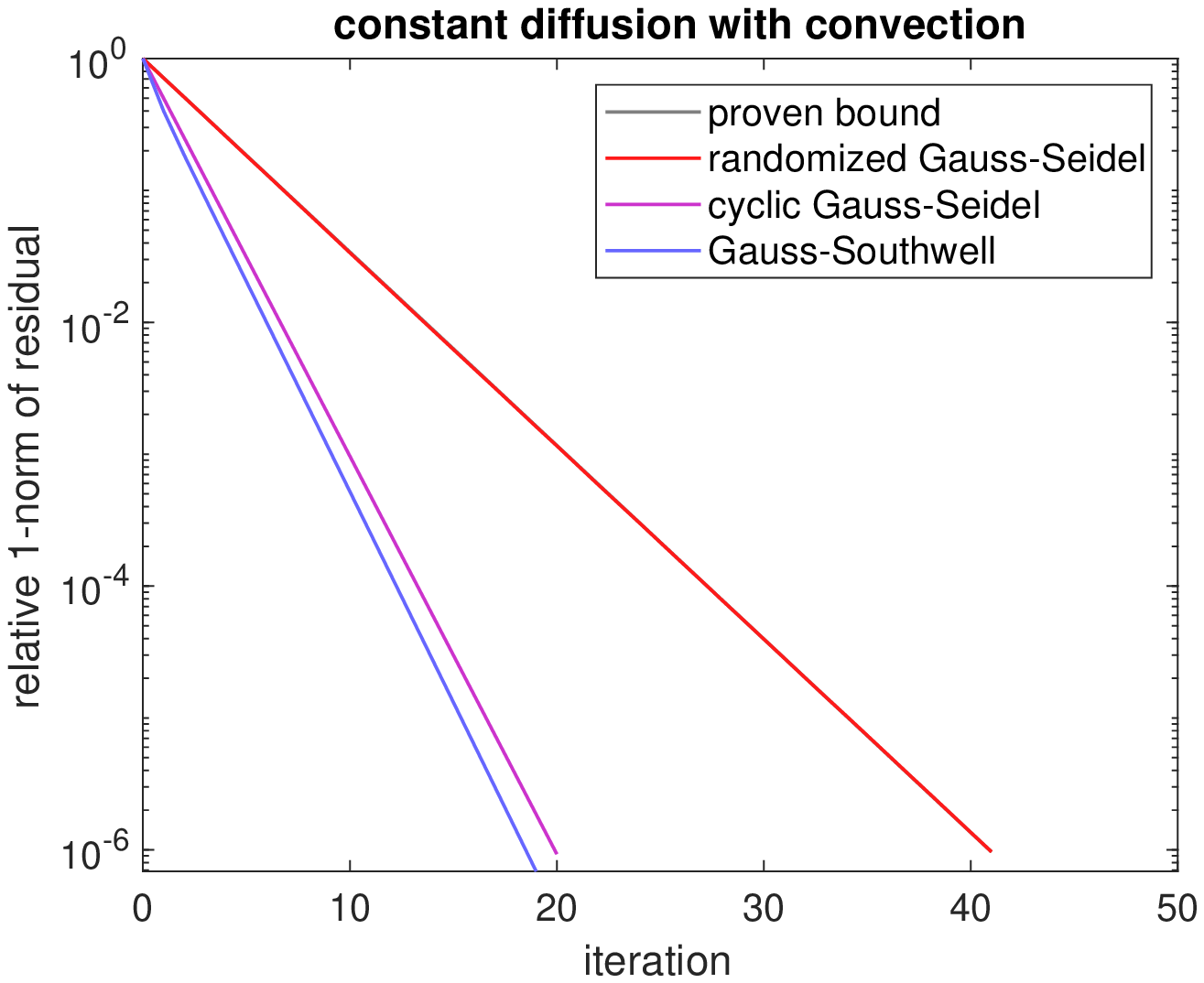}
\includegraphics[width=0.5\textwidth]{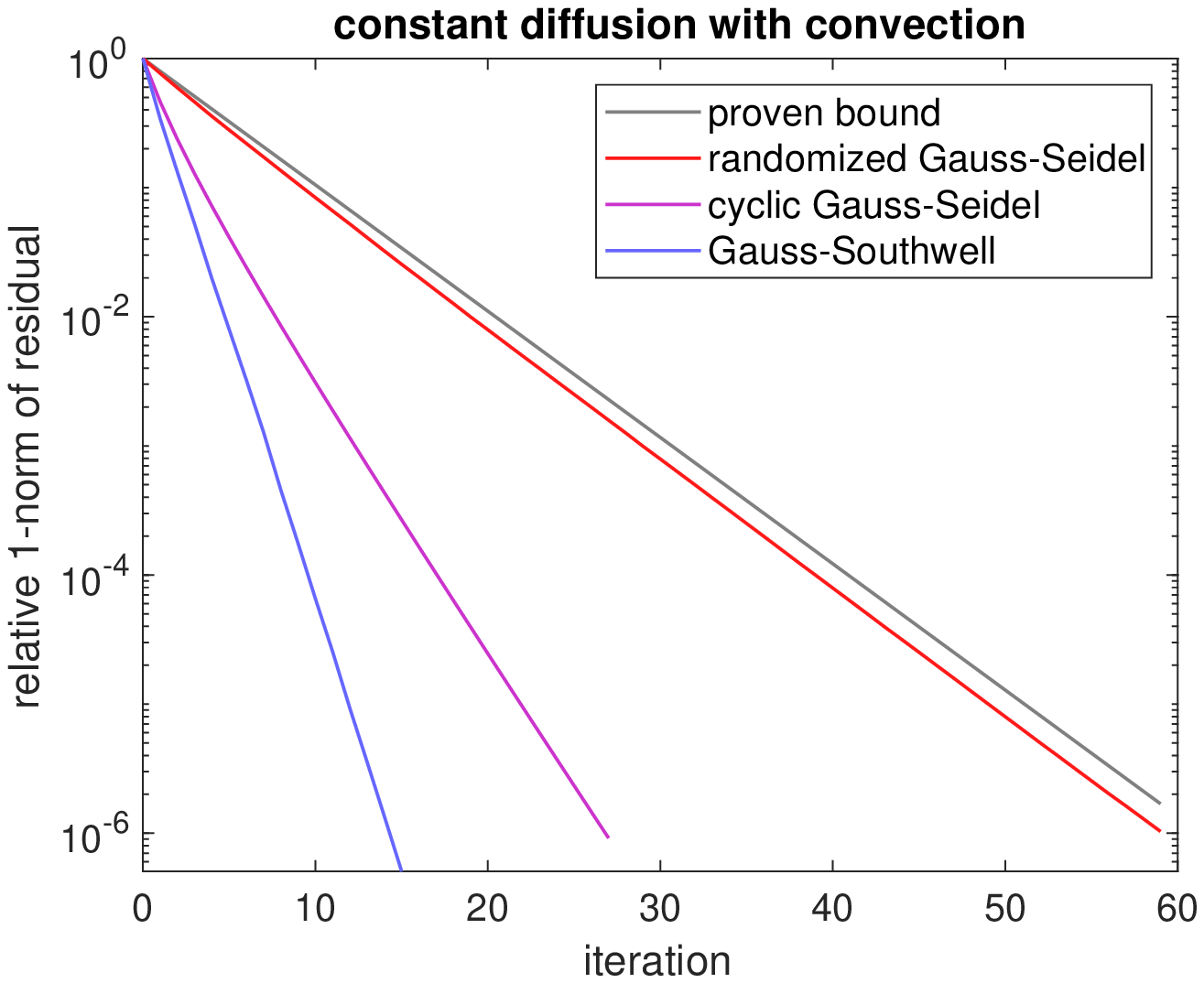}
}
\centerline{\includegraphics[width=0.5\textwidth]{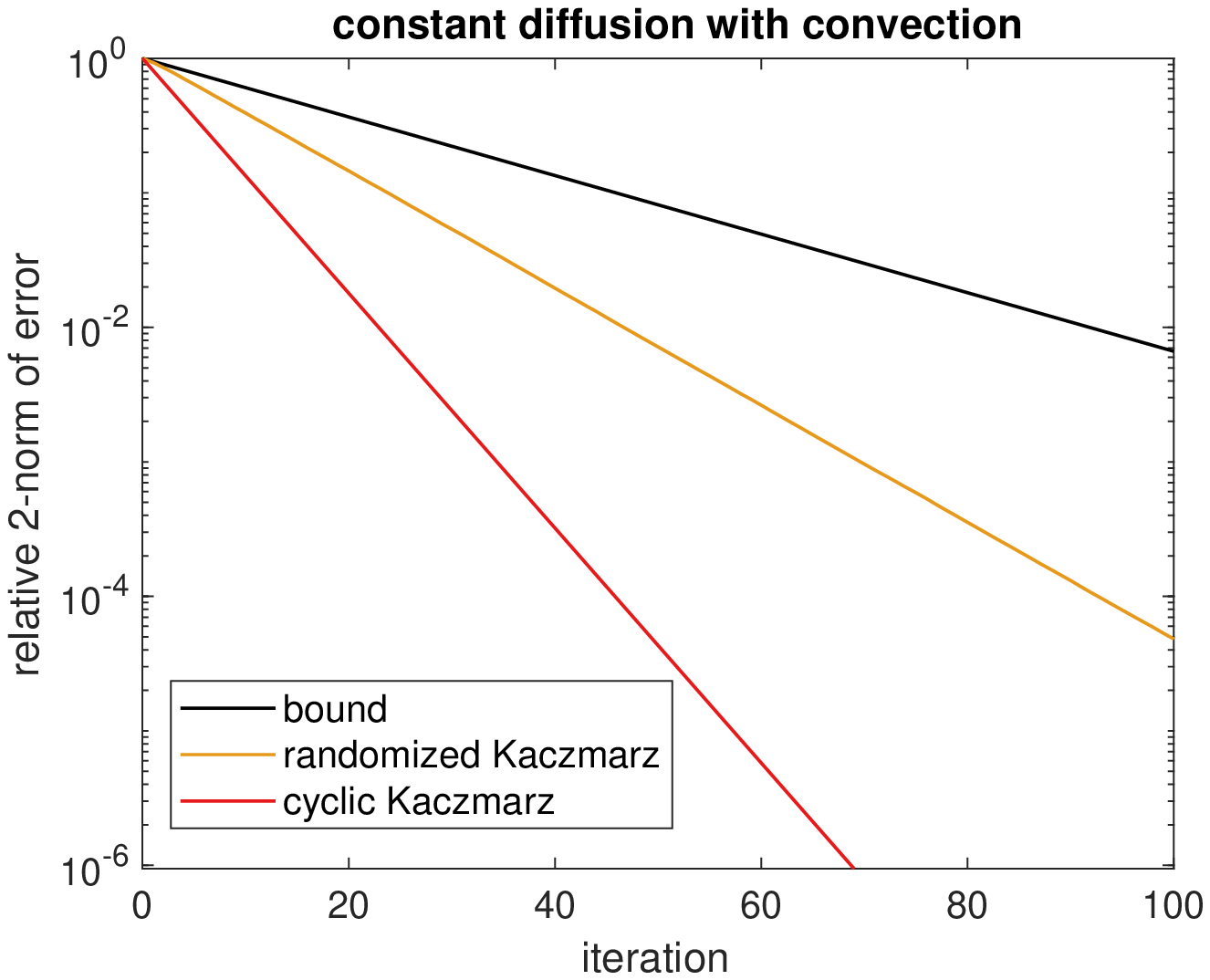}
\includegraphics[width=0.5\textwidth]{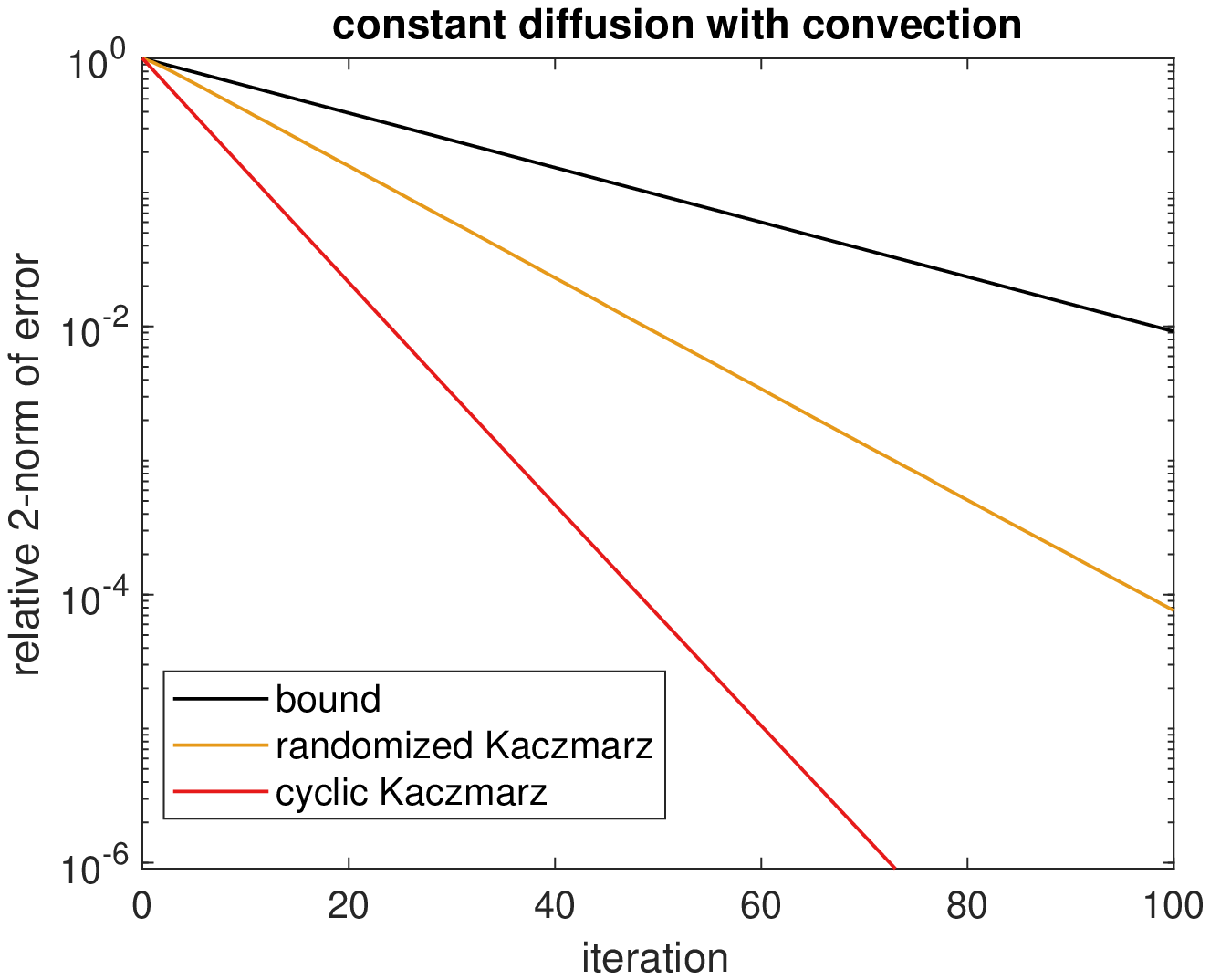}
}
\caption{\af{Comparison of Gauss-Seidel type (top row) and Kaczmarz type (bottom row) methods}. \textit{Left:} weak convection, $\sigma = 1$. \textit{Right:} strong convection, $\sigma = 400$.  Top-down order in the legend corresponds to top-down order of the {plotted lines}. \label{H:fig}}
\end{figure}

 The \af{plots in the top row} show a similar behavior of the different relaxation methods as in the hpd case, with Gauss-Southwell being fastest, especially for strong convection, and randomized Gauss-Seidel converging roughly half as fast as cyclic Gauss-Seidel---and this for both values of $\sigma$. As opposed to the hpd case, the theoretical bounds are now much closer to the actually observed convergence behavior of the randomized relaxations. For $\sigma = 1$ the bounds are actually that close that in the graph they are hidden behind the reported residual norms. For $\sigma=400$ the bounds can be distinguished from the residual norms in the plots, but they are still remarkably close.  
 
 \af{The bottom row of Figure~\ref{H:fig} contains results for the cyclic and the randomized Kaczmarz methods. 
 With $a_i^*$ denoting the $i$th row of $A$, and $a_i$ the  corresponding column vector, 
relaxing component $i$ in Kaczmarz amounts to the update
 \[
 x^{k+1} = x^{k} + \frac{b_i-a_i^*x^k}{\|a_i^*\|_2^2}a_i.
 \]
 We report these results since they allow for a comparison, 
 randomized Kaczmarz being one of the most prominent randomized 
 system solvers for non-symmetric systems. }

\af{The plots report the 2-norm of the error for which we know a bound given by the right hand side in the inequality
 \[
 \|x^k-A^{-1}b\|_2^2 \leq (1-\alpha^{\K})^n \|x^0-A^{-1}b\|_2^2 \enspace \mbox{with } \alpha^{\K} = \frac{\sigma_{\min}(A)^2}{\|A\|_F^2},
 \]
 where $\sigma_{\min}(A)$ is the smallest non-zero singular value of $A$. This bound holds if 'optimal' probabilities $p_i$ are chosen, 
see \cite{StrohmerVershyin2009}, as $
 p_i = \|a_i^*|_2^2/\|A\|_F^2$, 
 which is what we did for these experiments. 
 }
 
 \af{
 We see that as with Gauss-Seidel, the cyclic algorithms converges 
 approximately twice as fast than the randomized algorithm. The theoretical bounds are less sharp than for 
 the Gauss-Seidel methods. For both matrices, the cyclic and 
 randomized Kaczmarz methods converge significantly slower than 
 their Gauss-Seidel counterparts while, moreover, one relaxation 
 in Kaczmarz needs approximately twice as many operations than in 
 Gauss-Seidel. Since the plots on the top row report $\ell_1$-norms 
 rather than 2-norms of the error, let us just mention that in the 
 weak convection case, $\sigma = 1$, the relative 2-norm of the 
 residual in the randomized Gauss-Seidel run from the top row of 
 Figure~\ref{H:fig} is $1.22\cdot10^{-6}$ at iteration 41, and in 
 the strong convection case, $\sigma = 400$, it is $1.65\cdot10^{-6}$ at iteration 60.      
On the other hand, randomized Kaczmarz failed to converge to the desired tolerance of $10^{-6}$
within 100 iterations.
}

\section{Conclusion} 
We developed theoretical convergence bounds for both randomized and greedy pick relaxations for 
nonsingular linear systems. While we mainly reviewed results for the Gauss-Seidel 
relaxations in the case of a Hermitian positive definite matrix $A$, we presented several new convergence results for 
\af{nonsymmetric matrices in}
the case where the iteration matrix has a weighted $\ell_1$-norm less than 1. From this we could deduce several convergence results for randomized Gauss-Seidel and Gauss-Southwell relaxations for H-matrices. We also presented results which show how to choose the probability distributions (in case of randomized relaxations) or the greedy pick rule (in case of greedy iterations) which minimize 
our convergence bounds.   
Numerical experiments illustrate our theoretical results
\af{and also show that the methods analyzed are faster than Kaczmarz for square matrices}.
 
\paragraph*{Acknowledgement.} We want to thank Karsten Kahl from University of Wuppertal for sharing his Matlab implementations constructing convection-diffusion matrices with us.
We also thank Vahid Mahzoon {from Temple University} for some preliminary experiments which helped guide our thinking. 
\af{We express our gratitude to the two reviewers for their comments and questions, which helped improve our presentation.}

\bibliographystyle{plainnat}
\bibliography{random.v2}

\begin{thebibliography}{37}
\providecommand{\natexlab}[1]{#1}
\providecommand{\url}[1]{\texttt{#1}}
\expandafter\ifx\csname urlstyle\endcsname\relax
  \providecommand{\doi}[1]{doi: #1}\else
  \providecommand{\doi}{doi: \begingroup \urlstyle{rm}\Url}\fi

\bibitem[Avron et~al.(2015)Avron, Druinsky, and Gupta]{Avron.JACM.15}
Haim Avron, Alex Druinsky, and Anshul Gupta.
\newblock Revisiting asynchronous linear solvers: Provable convergence rate
  through randomization.
\newblock \emph{Journal of the {ACM}}, 62, 2015.
\newblock Article 51 (27 pages).

\bibitem[Bai and Wu(2018{\natexlab{a}})]{BaiWu18}
Zhong-Zhi Bai and Wen-Ting Wu.
\newblock On convergence rate of the randomized {K}aczmarz method.
\newblock \emph{Linear Algebra and its Applications}, 553:\penalty0 252--269,
  2018{\natexlab{a}}.

\bibitem[Bai and Wu(2018{\natexlab{b}})]{BaiWu18b}
Zhong-Zhi Bai and Wen-Ting Wu.
\newblock On relaxed greedy randomized {K}aczmarz methods for solving large
  sparse linear systems.
\newblock \emph{Applied Mathematics Letters}, 83:\penalty0 21--26,
  2018{\natexlab{b}}.

\bibitem[Bai and Wu(2021)]{BaiWu21}
Zhong-Zhi Bai and Wen-Ting Wu.
\newblock On greedy randomized augmented {K}aczmarz method for solving large
  sparse inconsistent linear systems.
\newblock \emph{SIAM Journal on Scientific Computing}, 43:\penalty0
  A3892--A3911, 2021.

\bibitem[Bai et~al.(2021)Bai, Wang, and Wu]{BaiWangWu21}
Zhong-Zhi Bai, Lu~Wang, and Wen-Ting Wu.
\newblock On convergence rate of the randomized {G}auss-{S}eidel method.
\newblock \emph{Linear Algebra and its Applications}, 611:\penalty0 237--252,
  2021.

\bibitem[Bai et~al.(2022)Bai, Wang, and Muratova]{BaiWangMuratova22}
Zhong-Zhi Bai, Lu~Wang, and Galina~V. Muratova.
\newblock On relaxed greedy randomized augmented {K}aczmarz methods for solving
  large sparse inconsistent linear systems.
\newblock \emph{East Asian Journal on Applied Mathematics}, 12:\penalty0
  323--332, 2022.

\bibitem[Berman and Plemmons(1994)]{BermanPlemmons}
Abraham Berman and Robert~J. Plemmons.
\newblock \emph{Nonnegative Matrices in the Mathematical Sciences}, volume~9 of
  \emph{Classics in Applied Mathematics}.
\newblock SIAM, Philadelphia, 1994.

\bibitem[Du(2019)]{K_Du_NLAA19}
Kui Du.
\newblock Tight upper bounds for the convergence of the randomized extended
  {K}aczmarz and {G}auss-{S}eidel algorithms.
\newblock \emph{Numerical Linear Algebra with Applications}, 26:\penalty0
  e2233, 2019.
\newblock 14 pages.

\bibitem[Frommer and Szyld(2000)]{Frommer.Szyld.00}
Andreas Frommer and Daniel~B. Szyld.
\newblock On asynchronous iterations.
\newblock \emph{Journal of Computational and Applied Mathematics},
  123:\penalty0 201--216, 2000.

\bibitem[Glusa et~al.(2020)Glusa, Boman, Chow, Rajamanickam, and
  Szyld]{Glusa.etal.20}
Christian Glusa, Erik~G. Boman, Edmond Chow, Sivasankaran Rajamanickam, and
  Daniel~B. Szyld.
\newblock Scalable asynchronous domain decomposition solvers.
\newblock \emph{SIAM Journal on Scientific Computing}, 42:\penalty0 C384--C409,
  2020.

\bibitem[Gower and Richt\'ark(2015)]{Gower.Richtark.15}
Robert~M. Gower and Peter Richt\'ark.
\newblock Randomized iterative methods for linear systems.
\newblock \emph{SIAM Journal on Matrix Analysis and Applications}, 36:\penalty0
  1660--1690, 2015.

\bibitem[Gower et~al.(2021)Gower, Molitor, Moorman, and Needell]{Goweretal2021}
Robert~M. Gower, Denali Molitor, Jacob Moorman, and Deanna Needell.
\newblock On adaptive sketch-and-project for solving linear systems.
\newblock \emph{SIAM Journal on Matrix Analysis and Applications}, 42:\penalty0
  954--989, 2021.

\bibitem[Griebel and Oswald(2012)]{GriOsw2012}
Michael Griebel and Peter Oswald.
\newblock Greedy and radomized versions of the multiplicative {S}chwarz method.
\newblock \emph{Linear Algebra and its Applications}, 437:\penalty0 1596--1610,
  2012.

\bibitem[Guan et~al.(2020)Guan, Li, Xing, and Qiao]{GuanGuoXingQiao20}
Ying-Jun Guan, Wei-Guo Li, Li-Li Xing, and Tian-Tian Qiao.
\newblock A note on convergence rate of randomized {K}aczmarz method.
\newblock \emph{Calcolo}, 57:\penalty0 Paper No.~26, 11, 2020.

\bibitem[Guo and Li(2018)]{GuoLi18}
Jun~Han Guo and Wei~Guo Li.
\newblock The randomized {K}aczmarz method with a new random selection rule.
\newblock \emph{Numerical Mathematics. A Journal of Chinese Universities.
  Gaodeng Xuexiao Jisuan Shuxue Xuebao}, 40:\penalty0 65--75, 2018.

\bibitem[Haddock and Ma(2021)]{HaddockMa21}
Jamie Haddock and Anna Ma.
\newblock Greed works: An improved analysis of sampling {K}aczmarz-{M}otzkin.
\newblock \emph{SIAM Journal on Mathematics of Data Science}, 3:\penalty0
  342--368, 2021.

\bibitem[Kaczmarz(1937)]{Kaczmarz1937}
Stefan Kaczmarz.
\newblock Angen{\"a}herte {A}ufl{\"o}sung von {S}ystemen linearer
  {G}leichungen.
\newblock \emph{Bulletin International de l'Academie Polonaise de Sciences et
  de Lettres, Cl.\ Sci.\ Math.\ Nat.\ A}, 35:\penalty0 355--357, 1937.

\bibitem[Kannan and Vempala(2017)]{Kannan.Vempala.17}
Ravindran Kannan and Santosh Vempala.
\newblock Randomized algorithms in numerical linear algebra.
\newblock \emph{Acta Numerica}, pages 95--135, 2017.

\bibitem[Leventhal and Lewis(2010)]{Leventhal.Lewis.10}
Dennis Leventhal and Adrian~S. Lewis.
\newblock Randomized methods for linear constraints: {C}onvergence rates and
  conditioning.
\newblock \emph{Mathematics of Operations Research}, 35:\penalty0 641--654,
  2010.

\bibitem[Ma et~al.(2015)Ma, Needell, and Ramdas]{Ma.Needell.Ramdas.SIMAX15}
Anna Ma, Deanna Needell, and Aaditya Ramdas.
\newblock Convergence properties of the randomized randomized extended
  {G}auss-{S}eidel and {K}aczmarz algorithms.
\newblock \emph{SIAM Journal on Matrix Analysis and Applications}, 36:\penalty0
  1590--1604, 2015.

\bibitem[Magoul\`es et~al.(2017)Magoul\`es, Szyld, and
  Venet]{Magoules.Szyld.Venet.17}
Fr\'ed\'eric Magoul\`es, Daniel~B. Szyld, and C\'edric Venet.
\newblock Asynchronous optimized {S}chwarz methods with and without overlap.
\newblock \emph{Numerische Mathematik}, 137:\penalty0 199--227, 2017.

\bibitem[Martinsson and Tropp(2020)]{Martinsson.Tropp.20}
Per-Gunnar Martinsson and Joel~A. Tropp.
\newblock Randomized numerical linear algebra: foundations and algorithms.
\newblock \emph{Acta Numerica}, pages 403--572, 2020.

\bibitem[Ortega and Rheinboldt(2000)]{OrtegaRheinboldt1970}
J.~M. Ortega and W.~C. Rheinboldt.
\newblock \emph{Iterative Solution of Nonlinear Equations in Several
  Variables}, volume~30 of \emph{Classics in Applied Mathematics}.
\newblock Society for Industrial and Applied Mathematics (SIAM), Philadelphia,
  PA, 2000.
\newblock Reprint of the 1970 original.

\bibitem[Richt\'{a}rik and Tak\'{a}\v{c}(2020)]{RichtarikKavac2020}
Peter Richt\'{a}rik and Martin Tak\'{a}\v{c}.
\newblock Stochastic reformulations of linear systems: Algorithms and
  convergence theory.
\newblock \emph{SIAM Journal on Matrix Analysis and Applications}, 41:\penalty0
  487--524, 2020.

\bibitem[R\"{u}de(1993)]{Ruede.book}
Ulrich R\"{u}de.
\newblock \emph{Mathematical and Computational Techniques for Multigrid
  Adaptive Methods}.
\newblock SIAM, Philadelphia, 1993.

\bibitem[Saad(1996)]{saad-book}
Yousef Saad.
\newblock \emph{Iterative Methods for Sparse Linear Systems}.
\newblock PWS Publishing Co., Boston, 1996.
\newblock Second edition, {SIAM}, {P}hiladelphia, 2003.

\bibitem[Saad(2020)]{Saad.history.20}
Yousef Saad.
\newblock Iterative methods for linear systems of equations: {A} brief
  historical journey.
\newblock In Susanne~C. Brenner, Igor Shparlinski, Chi-Wang Shu, and Daniel~B.
  Szyld, editors, \emph{Mathematics of Computation 75 Years}. American
  Mathematical Society, Providence, RI, 2020.

\bibitem[Smith et~al.(1996)Smith, Bj{\o}rstad, and Gropp]{Smith:1996:DPM}
Barry~F. Smith, Petter~E. Bj{\o}rstad, and William Gropp.
\newblock \emph{Domain Decomposition: Parallel Multilevel Methods for Elliptic
  Partial Differential Equations}.
\newblock Cambridge University Press, 1996.

\bibitem[Steinerberger(2021{\natexlab{a}})]{Steinberger21}
Stefan Steinerberger.
\newblock A weighted randomized {K}aczmarz method for solving linear systems.
\newblock \emph{Mathematics of Computation}, 90:\penalty0 2815--2826,
  2021{\natexlab{a}}.

\bibitem[Steinerberger(2021{\natexlab{b}})]{Steinberger21b}
Stefan Steinerberger.
\newblock Randomized {K}aczmarz converges along small singular vectors.
\newblock \emph{SIAM Journal on Matrix Analysis and Applications}, 42:\penalty0
  608--615, 2021{\natexlab{b}}.

\bibitem[Strikwerda(2002)]{strik:02}
John~C. Strikwerda.
\newblock A probabilistic analysis of asynchronous iteration.
\newblock \emph{Linear Algebra and its Applications}, 349:\penalty0 125--154,
  2002.

\bibitem[Strohmer and Vershynin(2009)]{StrohmerVershyin2009}
Thomas Strohmer and Roman Vershynin.
\newblock A randomized {K}aczmarz algorithm with exponential convergence.
\newblock \emph{J. Fourier Anal. Appl.}, 15\penalty0 (2):\penalty0 262--278,
  2009.
\newblock \doi{10.1007/s00041-008-9030-4}.

\bibitem[Trottenberg et~al.(2000)Trottenberg, Oosterlee, and
  Schuller]{Trottenberg}
Ulrich Trottenberg, Cornelius Oosterlee, and Anton Schuller.
\newblock \emph{Multigrid}.
\newblock Academic Press, New York, 2000.

\bibitem[Varga(1962)]{varga-62}
Richard~S. Varga.
\newblock \emph{Matrix Iterative Analysis}.
\newblock Prentice-Hall, Englewood Cliffs, New Jersey, 1962.
\newblock Second Edition, revised and expanded, Springer, Berlin, 2000.

\bibitem[Wang et~al.(2022)Wang, Li, Bao, and Liu]{WangLiBaoLiu22}
Fang Wang, Weiguo Li, Wendi Bao, and Li~Liu.
\newblock Greedy randomized and maximal wwighted residual {K}aczmarz methods
  with oblique projection.
\newblock \emph{Electronic Research Archive}, 30:\penalty0 1158--1186, 2022.

\bibitem[Wolfson-Pou and Chow(2017)]{Wolfson-Pou.Chow.17}
Jordi Wolfson-Pou and Edmond Chow.
\newblock Distributed {S}outhwell: An iterative method with low communication
  costs.
\newblock In \emph{International Conference for High Performance Computing,
  Networking, Storage, and Analysis (SC17), Denver, CO}. Association for
  Computing Machinery, 2017.
\newblock (13 pages).

\bibitem[Yang(2021)]{Yang21}
Xi~Yang.
\newblock A geometric probability randomized {K}aczmarz method for large scale
  linear systems.
\newblock \emph{Applied Numerical Mathematics}, 164:\penalty0 139--160, 2021.

\end{thebibliography}

\end{document}